\newtheorem{theorem}{Theorem}[section]
\newtheorem{lemma}[theorem]{Lemma}
\theoremstyle{definition}
\newtheorem{remark}{Remark}
\providecommand{\customgenericname}{}
\newcommand{\newcustomtheorem}[2]{\newenvironment{#1}[1]
  {\renewcommand\customgenericname{#2}
   \renewcommand\theinnercustomgeneric{##1}\innercustomgeneric}{\endinnercustomgeneric}}
\newcommand{\newcustomlemma}[2]{\newenvironment{#1}[1]
  {\renewcommand\customgenericname{#2}
   \renewcommand\theinnercustomgeneric{##1} \innercustomgeneric}{\endinnercustomgeneric}}
\numberwithin{equation}{section}
\newcommand{\rr}{\mathbb{R}}
\newcommand{\nn}{\mathbb{N}}
\newcommand{\rn}{\mathbb{R}^n}
\newcommand{\zz}{\mathbb{Z}}
\newcommand{\zn}{\mathbb{Z}^n}
\def\SS{{\mathscr{S}}}
\def\xxxi{\vec{\boldsymbol{\xi}}}
\def\000{\vec{\boldsymbol{0}}}
\def\|{{\boldsymbol{|}}}
\def\fff{\vec{\boldsymbol{f}}}
\def\xxx{\vec{\boldsymbol{x}}}
\def\yyy{\vec{\boldsymbol{y}}}
\newcommand{\wh}{\widehat}
\begin{document}

\author{Loukas Grafakos}
\address{L. Grafakos, Department of Mathematics, University of Missouri, Columbia, MO 65211, USA} 
\email{grafakosl@missouri.edu}

\author{Bae Jun Park}
\address{B. Park, School of Mathematics, Korea Institute for Advanced Study, Seoul 02455, Republic of Korea}
\email{qkrqowns@kias.re.kr}

\thanks{The first author would like to acknowledge the support of  the Simons Foundation grant 624733. The second author is supported in part by NRF grant 2019R1F1A1044075 and by a KIAS Individual Grant MG070001 at the Korea Institute for Advanced Study}

\title[Multilinear multiplier theorem]{The multilinear H\"ormander multiplier theorem with a Lorentz-Sobolev  condition}
\subjclass[2010]{Primary 42B15, 42B25}
\keywords{Multilinear operators, H\"ormander's multiplier theorem}

\begin{abstract} 
In this article, we provide a multilinear  version  of the H\"ormander multiplier theorem 
with a Lorentz-Sobolev space condition. The work is motivated by the recent result  of the first author and Slav\'ikov\'a \cite{Gr_Sl} where an analogous version of classical H\"ormander multiplier theorem was obtained;  
this version is sharp in many ways and reduces the number of indices that appear in the statement of the theorem.  As a natural extension of the linear case, in this work,  we prove that if $mn/2<s<mn$, then
\begin{equation*}
\big\Vert T_{\sigma}(f_1,\dots,f_m)\big\Vert_{L^p(\rn)}\lesssim \sup_{k\in\zz}\big\Vert \sigma(2^k\;\vec{\cdot}\;)\wh{\Psi^{(m)}}\big\Vert_{L_{s}^{mn/s,1}(\rr^{mn})}\Vert f_1\Vert_{L^{p_1}(\rn)}\cdots \Vert f_m\Vert_{L^{p_m}(\rn)} 
\end{equation*}
 for certain $p,p_1,\dots,p_m$ with $1/p=1/p_1+\dots+1/p_m$. We also show that the above estimate is sharp, in the sense that the Lorentz-Sobolev space $L_s^{mn/s,1}$ cannot be replaced by $L_{s}^{r,q}$ for $r<mn/s$, $0<q\leq \infty$, or by $L_s^{mn/s,q}$ for $q>1$.

\end{abstract}

\maketitle

\section{Introduction}\label{introduction}

Let $\mathscr{S}(\rn)$ denote the space of all Schwartz functions on $\rn$.
Given a bounded function $\sigma$ on $\rn$, we define a linear multiplier operator
\begin{equation*}
T_{\sigma}f(x):=\int_{\rn}{\sigma(\xi)\wh{f}(\xi)e^{2\pi i\langle x,\xi\rangle}}d\xi
\end{equation*} acting on  $f\in\SS(\rn)$ where $\wh{f}(\xi):=\int_{\rn}{f(x)e^{-2\pi i\langle x,\xi\rangle}}dx$ is the Fourier transform of $f$.
One of important problems in harmonic analysis is to find optimal sufficient conditions on $\sigma$ for the corresponding operator $T_{\sigma}$ to admit an $L^p$-bounded extension for all $1<p<\infty$.
The classical theorem of Mikhlin \cite{Mik} states that if the condition
\begin{equation*}
\big| \partial_{\xi}^{\alpha}\sigma(\xi)\big|\lesssim_{\alpha}|\xi|^{-|\alpha|}, \quad \xi\not= 0
\end{equation*} holds for all multi-indices $\alpha$ with $|\alpha|\leq [n/2]+1$, then
 $T_{\sigma}$ extends to a bounded operator in $L^p$ for $1<p<\infty$. 
 H\"ormander \cite{Ho} refined this result, using the weaker condition
\begin{equation}\label{Hocondition}
\sup_{k\in\zz}{\big\Vert \sigma(2^k\cdot)\wh{\psi}\big\Vert_{L_s^2(\rn)}}<\infty
\end{equation} for $s>n/2$, where $L_s^2(\rn)$ denotes the standard fractional Sobolev space on $\rn$ and $\psi$ is a Schwartz function on $\rn$ whose Fourier transform is supported in the annulus $1/2<|\xi|<2$ and satisfies $\sum_{k\in\zz}{\wh{\psi}(\xi/2^k)}=1$ for $\xi\not= 0$.
Calder\'on and Torchinsky \cite{Ca_To} proved that if (\ref{Hocondition}) holds for $s>n/p-n/2$, then $T_{\sigma}$ is bounded in $H^p(\rn)$ for $0<p\leq 1$.
They also showed that $L_s^2$ in (\ref{Hocondition}) can be replaced by $L_s^r$ for the $L^p$-boundedness, $1<p<\infty$, using a complex interpolation method, and the assumption in their result was weakened by Grafakos, He, Honz\'ik, and Nguyen \cite{Gr_He_Ho_Ng}. 
Recently, Grafakos and Slav\'ikov\'a \cite{Gr_Sl} have improved the previous multiplier theorems by replacing $L_s^r$ by the Lorentz-Sobolev space $L_s^{n/s,1}$.

We recall the definition of Lorentz spaces $L^{p,q}(\rn)$ and Lorentz-Sobolev spaces $L^{p,q}_s(\rn)$.
For any measurable function $f$ on $\rn$, we let $d_f(s):=\big| \{x\in\rn:|f(x)|>s\}\big|$ be the distribution function of $f$ and 
\begin{equation*}
f^*(t):=\inf\big\{s>0: d_f(s)  \leq t \big\}, \qquad  t>0
\end{equation*} 
be its decreasing rearrangement.
We adopt the convention that the infimum of the empty set is $\infty$.
For $0<p,q\leq \infty$ the quasi-norm on the Lorentz space $L^{p,q}(\rn)$ is given by
\begin{equation*}
\Vert f\Vert_{L^{p,q}(\rn)}:=\begin{cases}
\displaystyle \Big(\int_0^{\infty}{\big(t^{1/p}f^*(t) \big)^{q}}\frac{dt}{t} \Big)^{1/q}, & q<\infty\\
\qquad \displaystyle\sup_{t>0}{t^{1/p}f^*(t)}, & q=\infty.
\end{cases}
\end{equation*} 
For $s>0$ let $(I-\Delta)^{s/2}$ be the inhomogeneous fractional Laplacian operator, explicitly defined by
\begin{equation*}
(I-\Delta)^{s/2}f:=\big( (1+4\pi^2|\cdot|^2)^{s/2}\widehat{f}\big)^{\vee}
\end{equation*} 
where $f^{\vee}(\xi):=\wh{f}(-\xi)$ is the inverse Fourier transform of $f$. 
Then for $0<p,q\leq \infty$ and $s>0$ we define
\begin{equation*}
\Vert f\Vert_{L^{p,q}_s(\rn)}:=\big\Vert (I-\Delta)^{s/2}f\big\Vert_{L^{p,q}(\rn)}.
\end{equation*}

\begin{customthm}{A}(\cite{Gr_Sl})\label{knownresult}
Let $1<p<\infty$ and $|n/p-n/2|<s<n$.
Then there exists $C>0$ such that
\begin{equation}\label{knownest}
\Vert T_{\sigma}f\Vert_{L^p(\rn)}\leq C \sup_{k\in \zz}{\big\Vert \sigma(2^k\cdot)\wh{\psi}\big\Vert_{L_s^{n/s,1}(\rn)}}\Vert f\Vert_{L^p(\rn)}.
\end{equation}
\end{customthm}

We also refer to \cite{Gr_Park} for an extension of Theorem \ref{knownresult} to the Hardy space $H^p(\rn)$ for $0<p<\infty$. 
Note that for $0<r_1< r_2<\infty$ and $0<q_1,q_2\leq \infty$
\begin{equation}\label{embedding1}
\big\Vert \sigma(2^k\cdot)\wh{\psi}\big\Vert_{L_s^{r_1,q_1}(\rn)}\lesssim \big\Vert \sigma(2^k\cdot)\wh{\psi}\big\Vert_{L_{s}^{r_2,q_2}(\rn)} \quad \text{~uniformly in }~ k,
\end{equation}
which follows from H\"older's inequality with even integers $s$, complex interpolation technique, and a proper embedding theorem. 
Moreover, if $q_1\geq q_2$, then the embedding $L_s^{r,q_2}(\rn) \hookrightarrow L_s^{r,q_1}(\rn)$ yields that 
\begin{equation}\label{embedding2}
\big\Vert \sigma(2^k\cdot)\widehat{\psi}\big\Vert_{L_s^{r,q_1}(\rn)}\lesssim \big\Vert \sigma(2^k\cdot)\widehat{\psi}\big\Vert_{L_{s}^{r,q_2}(\rn)} \quad \text{~uniformly in }~ k. 
\end{equation}
 Thus,  $L_s^{n/s,1}(\rn)$ is   bigger   than $L_s^{r,q}(\rn)$ for $r> n/s$  when $0<q\leq \infty$ and than  $L_s^{n/s,q}(\rn)$ when $0<q< 1$; the spaces $L^r_s(\rn)=L^{r,r}_s(\rn)$ with $r>n/s$ appeared in 
previous versions of  the H\"ormander multiplier theorem. Moreover, it was shown in  
  \cite{Gr_Park}  that the parameters $r=n/s$ and $q=1$ in   Theorem \ref{knownresult} are sharp, i.e., 
  boundedness   in   (\ref{knownest}) fails if 
  $n/s$ is replaced $r<n/s$ or if $1$ is replaced    by      $q>1$.

\hfill

We now turn our attention to   multilinear  multiplier theory, which is the focus of this paper.
 Let $m$ be a positive integer greater than $1$, which will serve as the degree of the multilinearity of operators.
For a bounded function $\sigma$ on $\rr^{mn}$ we define the corresponding $m$-linear multiplier operator $T_{\sigma}$ by
\begin{equation*}
T_{\sigma}\big(f_1,\dots,f_m \big)(x):={\int_{\rr^{mn}}{\sigma(\xxxi)\Big(\prod_{j=1}^{m}\widehat{f_j}(\xi_j)\Big)e^{2\pi i\langle x,\sum_{j=1}^{m}{\xi_j} \rangle}}d\xxxi},\qquad x\in\rn
\end{equation*} for $f_j\in \SS(\rn)$ where $\xxxi:=(\xi_1,\dots,\xi_m)\in (\rn)^m$ and $d\xxxi:=d\xi_1\cdots d\xi_m$.
As a multilinear extension of Mikhlin's result, Coifman and Meyer \cite{Co_Me2} proved that if $L$ is sufficiently large and $\sigma$ satisfies
\begin{equation*}
\big| \partial_{\xi_1}^{\alpha_1} \cdots\partial_{\xi_m}^{\alpha_m}\sigma(\xxxi)\big|\lesssim_{\alpha_1,\dots,\alpha_m}\big(|\xi_1|+\dots+|\xi_m|\big)^{-(|\alpha_1|+\dots +|\alpha_m|)}, \quad \xxxi\not= \000
\end{equation*}
for $\xi_1,\dots,\xi_m\in\rn$ and multi-indices $\alpha_1,\dots,\alpha_m\in \zn$ with $|\alpha_1|+\dots+|\alpha_m|\leq L$, then $T_{\sigma}$ is bounded from $L^{p_1}\times \cdots\times L^{p_m}$ to $L^p$ for all $1<p<\infty$ and $1<p_1,\dots,p_m\leq \infty$ satisfying $1/p=1/p_1+\dots+1/p_m$. This result was extended to $p\leq 1$ by Kenig and Stein \cite{Ke_St} and Grafakos and Torres \cite{Gr_To}.

Let $\Psi^{(m)}$ be the $m$-linear counterpart of $\psi$. That is, $\Psi^{(m)}$ is a Schwartz function on $\rr^{mn}$ having the properties:
\begin{equation*}
\textup{Supp}(\widehat{\Psi^{(m)}})\subset \big\{\xxxi\in \rr^{mn}: 1/2\leq |\xxxi|\leq 2 \big\}, \qquad \sum_{k\in\zz}{\widehat{\Psi^{(m)}}(\xxxi/2^k)}=1, \quad \xxxi\not= \000.
\end{equation*}
Let $(\vec{I}-\vec{\Delta})^{s/2}$ denote the inhomogeneous
 fractional Laplacian operator acting on functions on $\rr^{mn}$.
For $s\geq 0$ and $0<r<\infty$ the Sobolev norm of $f$ is defined as
\begin{equation*}
\Vert f\Vert_{L_s^r(\rr^{mn})}:=\big\Vert (\vec{I}-\vec{\Delta})^{s/2}f \big\Vert_{L^r(\rr^{mn})}.
\end{equation*} 
Tomita \cite{Tom} obtained an $L^{p_1}\times \cdots\times L^{p_m}$  to $L^p$ boundedness for $T_{\sigma}$ in the range $1<p,p_1,\dots,p_m<\infty$ under a condition analogous to (\ref{Hocondition}):
\begin{customthm}{B}(\cite{Tom}) \label{knownresult1}
Let $1<p,p_1,\dots,p_m<\infty$  satisfy $1/p=1/p_1+\dots+1/p_m$.
Suppose  $s>mn/2$.
Then there exists $C>0$ such that
\begin{equation*}
\big\Vert T_{\sigma}\big(f_1,\dots,f_m \big) \big\Vert_{L^p(\rn)}\leq C \sup_{k\in \zz}\big\Vert \sigma(2^k\;\vec{\cdot}\;)\wh{\Psi^{(m)}}\big\Vert_{L^{2}_s(\rr^{mn})} \prod_{j=1}^{m}{\Vert f_j\Vert_{L^{p_j}(\rn)}}
\end{equation*}
for $f_1,\dots,f_m\in \SS(\rn)$.
\end{customthm} 

Grafakos and Si \cite{Gr_Si} extended Theorem \ref{knownresult1} to $p\leq 1$ using $L^r$-based Sobolev norms of $\sigma$ for $1<r\leq 2$:
\begin{customthm}{C}(\cite{Gr_Si}) \label{knownresult2}
Let $1<r\leq 2$, $0<p<\infty$, $r\leq p_1,\dots,p_m<\infty$, and $1/p_1+\dots+1/p_m=1/p$. 
Suppose $s>mn/r$.
Then there exists $C>0$ such that
\begin{equation*}
\big\Vert T_{\sigma}\big(f_1,\dots,f_m \big) \big\Vert_{L^p(\rn)}\leq C \sup_{k\in \zz}\big\Vert \sigma(2^k\;\vec{\cdot}\;)\wh{\Psi^{(m)}}\big\Vert_{L^{r}_s(\rr^{mn})} \prod_{j=1}^{m}{\Vert f_j\Vert_{L^{p_j}(\rn)}}
\end{equation*}
for  $f_1,\dots,f_m\in \SS(\rn)$.
\end{customthm} 

Note that Theorem \ref{knownresult2} provides a broader range of $p$'s but requires stronger assumptions on $s$, while, under the same condition $s>mn/2$ (when $r=2$), the estimate in Theorem \ref{knownresult2} is contained in Theorem \ref{knownresult1}.
We also refer to \cite{Fu_Tom, Gr_He_Ho, Gr_Mi_Ng_Tom, Gr_Ng, Gr_Park2, Mi_Tom, Park2, Tom} for further results.

\hfill

The aim of this paper is to provide a multilinear extension of Theorem \ref{knownresult}, which also provides
 a sharp version of Theorem \ref{knownresult1} and \ref{knownresult2}.
In order to state our main results, we first define two open sets $\mathcal{Q}_l$ and $\mathcal{P}$ in $\rr^m$ as follows
\begin{equation*}
\mathcal{Q}_l:=\big\{(r_1,\dots,r_m)\in \rr^m:0<r_j<l,~ 1\leq j\leq m \big\},
\end{equation*}
\begin{equation*}
\mathcal{P}:=\big\{(r_1,\dots,r_m)\in \rr^m: 0<r_1+\dots+r_m<1 \big\},
\end{equation*}
and denote by $hull{\big( \mathcal{Q}_{l},\mathcal{P}\big)}$ the convex hull containing both $\mathcal{Q}_l$ and $\mathcal{P}$.
Then our first  main result  is
\begin{theorem}\label{main1}
Let $0<p,p_1,\dots,p_m<\infty$ satisfy $1/p=1/p_1+\dots+1/p_m$. Suppose  $mn/2<s<mn$ and 
\begin{equation*}
(1/p_1,\dots,1/p_m)\in hull{\big( \mathcal{Q}_{\frac{s}{mn}},\mathcal{P}\big)}.
\end{equation*}
Then there exists $C>0$ such that
\begin{equation}\label{mainresult}
\big\Vert T_{\sigma}\fff\big\Vert_{L^p(\rn)}\leq C \sup_{k\in\zz}\big\Vert  \sigma(2^k\;\vec{\cdot}\;)  \widehat{\Psi^{(m)}}\big\Vert_{L_{s}^{mn/s,1}(\rr^{mn})}\prod_{j=1}^{m}{\Vert f_j\Vert_{L^{p_j}(\rn)}}
\end{equation} for $f_1,\dots, f_m \in \SS(\rn)$.
\end{theorem}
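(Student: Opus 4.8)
The plan is to reduce the multilinear estimate to a discrete vector-valued inequality and then interpolate, closely following the strategy of \cite{Gr_Sl} but adapted to the multilinear setting. First I would perform a Littlewood--Paley decomposition of the multiplier: writing $\sigma_k := \sigma(2^k\;\vec{\cdot}\;)\widehat{\Psi^{(m)}}$, one has $\sigma = \sum_{k\in\zz} \sigma_k(2^{-k}\;\vec{\cdot}\;)$, so $T_\sigma = \sum_k T_{\sigma_k(2^{-k}\cdot)}$ acts on the frequency annulus $|\xxxi|\sim 2^k$. By the usual almost-orthogonality (Fourier support considerations on the output side, where $\sum_j\xi_j$ ranges over a ball of radius $\sim 2^k$), it suffices to bound, for each dyadic piece, $T_{\sigma_k(2^{-k}\cdot)}(f_1,\dots,f_m)$ in terms of smooth frequency-localized pieces $\Delta_k f_j$ of the inputs, and then sum in $k$ using a square-function / maximal-function argument. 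The guiding principle is that the single scale-invariant quantity that survives is $\sup_k \Vert \sigma_k\Vert_{L_s^{mn/s,1}(\rr^{mn})}$.

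The core is a \emph{single-scale} estimate: for a smooth function $\tau$ supported in $\{1/2\le|\xxxi|\le 2\}$, control $\Vert T_\tau(f_1,\dots,f_m)\Vert_{L^p}$ by $\Vert\tau\Vert_{L_s^{mn/s,1}(\rr^{mn})}\prod_j\Vert f_j\Vert_{L^{p_j}}$ when the $f_j$ are also frequency-localized near the unit scale. The mechanism is to expand $\tau$ (or rather $(\vec I-\vec\Delta)^{-s/2}$ applied to it) in a way that trades Sobolev smoothness for decay of the convolution kernel, exactly as in the linear argument: the Lorentz condition $L^{mn/s,1}$ is the precise Lebesgue-space refinement needed so that, after writing the kernel as a superposition indexed by level sets of $\tau^*$, one obtains kernels whose $L^1$-type norms (after translation-by-$y$ and weighting) are summable against $\prod_j\Vert f_j\Vert_{L^{p_j}}$. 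Concretely, using the identity $\tau = \sum_{\nu} (\text{piece of } \tau \text{ at height } 2^\nu)$ from the rearrangement, one estimates each piece via Hausdorff--Young / Bernstein and sums; the endpoint count of derivatives $s$ and the endpoint integrability $mn/s$ at the point $q=1$ balance exactly. I would first establish this at, say, $p_1=\dots=p_m=2$ (so $1/p = m/2$), where Plancherel-type arguments are cleanest, and also at the "corner" configurations corresponding to $\mathcal{Q}_{s/mn}$ and $\mathcal{P}$.

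Next I would obtain the full range of exponents by multilinear complex interpolation. The set $hull(\mathcal Q_{s/mn},\mathcal P)$ is precisely the convex hull of (i) the box $0<1/p_j<s/mn$ — which is the region where one can afford to put each $f_j$ into an $L^{p_j}$ with $p_j>mn/s$ and run a Hölder argument distributing the $mn/s$ integrability of the kernel across the $m$ factors — and (ii) the simplex $1/p_1+\dots+1/p_m<1$, i.e.\ $p>1$, where duality and the linear theory (Theorem~\ref{knownresult}) applied in one variable at a time, combined with the off-diagonal estimates, give boundedness. The hypothesis $s>mn/2$ guarantees $s/mn>1/2$, so the box $\mathcal Q_{s/mn}$ contains $(1/2,\dots,1/2)$ and hence the Tomita point; and $s<mn$ keeps $s/mn<1$ so that the geometry is nondegenerate. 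Interpolating the estimates at the extreme points of these two regions — using analytic families obtained by moving $s$ slightly and using the embeddings \eqref{embedding1}, \eqref{embedding2} to stay within $L_s^{mn/s,1}$ — yields \eqref{mainresult} on the whole hull.

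The main obstacle I anticipate is the single-scale estimate at the endpoint Lorentz exponent, specifically making the $L^{mn/s,1}$ (rather than $L^{mn/s,q}$, $q>1$, or $L^r$, $r>mn/s$) norm come out with a \emph{uniform} constant after the dyadic summation in $k$ and after the almost-orthogonal reassembly of the $m$ frequency-localized inputs. In the linear case this is already delicate; here the interaction of $m$ independent frequency variables means the kernel decomposition must be carried out on $\rr^{mn}$ while the outputs live on $\rn$, so one must be careful that the "diagonal" direction $\sum_j\xi_j$ and the transverse directions are handled with the right weights. A secondary technical point is checking that the interpolation genuinely reaches every point of the open hull (not just a sub-polytope), which requires exhibiting enough vertices with independent estimates; I expect to need the full strength of the linear Theorem~\ref{knownresult} (including its Hardy-space extension from \cite{Gr_Park}) applied iteratively to freeze all but one of the $m$ variables.
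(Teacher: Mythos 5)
Your overall architecture---Littlewood--Paley decomposition of the multiplier, a single-scale kernel estimate in the Lorentz--Sobolev norm, duality to reach $p>1$, then multilinear interpolation to fill out the hull---is the same skeleton the paper uses. But the load-bearing step is left too vague, and you pad the plan with detours the paper does not need and that would not actually carry you to the endpoint.

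The crux of the paper's Section 3.1 is a \emph{pointwise} estimate (Lemma \ref{keylemma1}): if $\sigma$ is a single-scale piece, then
\[
\big|\sigma^{\vee}\ast(f_1\otimes\cdots\otimes f_m)(\xxx)\big|\lesssim \big\Vert \sigma(2^k\;\vec{\cdot}\;)\big\Vert_{L_s^{mn/s,1}(\rr^{mn})}\prod_{j=1}^m \mathcal{M}_q^{(n)}f_j(x_j),\qquad q>mn/s.
\]
This is proved by writing the convolution against the weight $(1+4\pi^2|\yyy|^2)^{s/2}$, pairing $L^{(mn/s)',1}$ against $L^{mn/s,\infty}$ via Lorentz H\"older, passing the first factor through the Lorentz Hausdorff--Young inequality (needs $mn/s>2$, i.e.\ $s<mn$), and controlling the weak-type factor by $\mathcal{M}_q^{(mn)}F$ via the Grafakos--Slav\'ikov\'a Lemma \ref{grsllemma}, finally splitting the $mn$-dimensional maximal function into a product of $n$-dimensional ones. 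Your proposal replaces this with a hand-waved ``superposition indexed by level sets of $\tau^*$'' plus ``Hausdorff--Young / Bernstein and sum''. That is not the mechanism, and it is exactly the place where the endpoint Lorentz exponent $(mn/s,1)$ (as opposed to a plain $L^r$, $r>mn/s$) has to be earned; without the weak-type bound from Lemma \ref{grsllemma} you will not get a maximal-function majorant, and without a maximal-function majorant the rest of your summation-in-$k$ plan (square function in the dominant variable, $\ell^\infty$ in the subdominant ones, then Fefferman--Stein) has nothing to feed on. Note also that the paper does not start from Plancherel at $p_j=2$; the endpoint difficulty is precisely near $p_j=mn/s<2$, and Plancherel at the center of the box does not help reach those corners.

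Two more concrete divergences from what is actually needed. For the simplex $\mathcal{P}$ the paper does \emph{not} invoke the linear Theorem \ref{knownresult} or any Hardy-space version of it ``one variable at a time''; it instead proves that the $m$-linear Lorentz--Sobolev quantity $\sup_k\Vert\sigma(2^k\;\vec{\cdot}\;)\wh{\Psi^{(m)}}\Vert_{L_s^{mn/s,1}}$ is comparable under the transpose change of variables $\xi_j\mapsto -(\xi_1+\cdots+\xi_m)$ (Lemma \ref{lorentzad}, itself proved by Leibniz, complex interpolation of Sobolev scales, and real interpolation to Lorentz), and then reduces $1<p_1<2\le p_2,\dots,p_m$ to the already-proved box via duality. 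Your proposal skips this invariance, which is essential: without it the dual multiplier's norm is a priori uncontrolled. Finally, no analytic family moving $s$ is required; once the two extreme regions $\mathcal{Q}_{s/mn}$ and $\mathcal{P}$ are in hand at the fixed $s$, ordinary multilinear (real) interpolation in the $p_j$'s already fills the open hull.
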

 Figure \ref{fig1}  shows the range of indices $p_1,p_2$ for which boundedness holds in the bilinear case $m=2$.
Note that only two cases $(1/p_1,\dots,1/p_m)\in \mathcal{Q}_{\frac{s}{mn}}$ and $(1/p_1,\dots,1/p_m)\in \mathcal{P}$ will be treated in the proof
as the desired result follows immediately via interpolation.
The first case is equivalent to $mn/s<p_1,\dots,p_m<\infty$ for which the proof is based on the Littlewood-Paley theory and the pointwise estimate in Lemma \ref{keylemma1} below. 
Since $mn/s<2$, the first one contains the result for $2\leq p_1,\dots,p_m<\infty$ and then a method of transposes of $T_{\sigma}$ and duality arguments will be applied to the case $1<p,p_1,\dots,p_m<\infty$ that coincides with the second part.

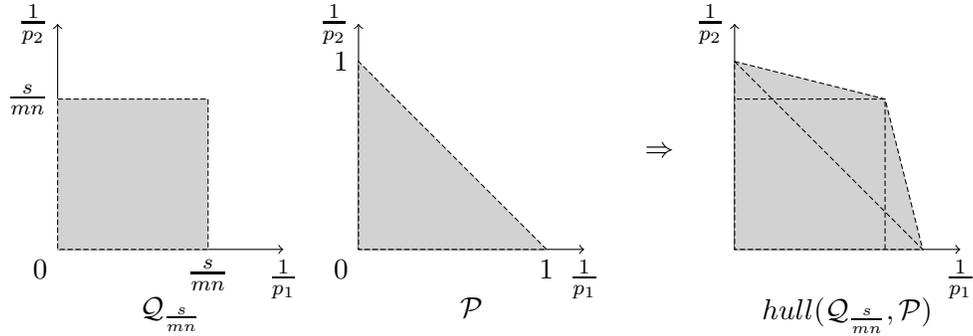
\begin{figure}[h]\label{fig1}
\begin{tikzpicture}
\draw [<->] (-3,3)--(-3,0)--(0,0);
\draw [<->] (1,3)--(1,0)--(4,0);
\draw [<->] (6,3)--(6,0)--(9,0);
\path[fill=gray!35] (-3,2)--(-3,0)--(-1,0)--(-1,2);
\path[fill=gray!35] (1,0)--(3.5,0)--(1,2.5);
\path[fill=gray!35] (6,0)--(8.5,0)--(8,2)--(6,2.5);
\draw[dash pattern= { on 2pt off 1pt}] (-3,2)--(-1,2)--(-1,0)--(-3,0)--(-3,2);
\draw[dash pattern= { on 2pt off 1pt}] (3.5,0)--(1,2.5)--(1,0)--(3.5,0);
\draw[dash pattern= { on 2pt off 1pt}]  (6,2.5)--(8.5,0)--(8,2)--(6,2.5)--(6,0)--(8.5,0); 
\draw[dash pattern= { on 2pt off 1pt}]  (8,0)--(8,2)--(6,2); 
\node [below left] at (-3,0) {$0$};\node [left] at (-3,2) {$\frac{s}{mn}$};\node [below] at (-1,0) {$\frac{s}{mn}$}; \node [below] at (0,0) {$\frac{1}{p_1}$};\node [left] at (-3,3) {$\frac{1}{p_2}$};
\node [below left] at (1,0) {$0$};\node [left] at (1,2.5) {$1$};\node [below] at (3.5,0) {$1$};\node [below] at (4,0) {$\frac{1}{p_1}$};\node [left] at (1,3) {$\frac{1}{p_2}$};
\node [below] at (-1.5,-0.5) {$\mathcal{Q}_{\frac{s}{mn}}$};\node [below] at (2.5,-0.5) {$\mathcal{P}$};\node [below] at (7.5,-0.5) {$hull(\mathcal{Q}_{\frac{s}{mn}}, \mathcal{P})$};\node [below] at (9,0) {$\frac{1}{p_1}$};\node [left] at (6,3) {$\frac{1}{p_2}$};
\node [below] at (5,1.5) {$\Rightarrow$};
\end{tikzpicture}
\caption{$L^{p_1}\times L^{p_2}\to L^p$ boundedness of $T_{\sigma}$, $m=2$.}
\end{figure}

As in the linear case, using   in (\ref{embedding1}) and (\ref{embedding2}), we may   replace $L_s^{mn/s,1}$ in (\ref{mainresult}) by $L_s^{r,q}$ for $r>mn/s$ and $0<q\leq \infty$ or by $L_s^{mn/s,q}$ for $0<q<1$.
We remark that Theorem \ref{main1} clearly improves Theorem \ref{knownresult1} and \ref{knownresult2} in view of $L_s^r=L_s^{r,r}$.
 
Our second main result is the sharpness of the parameters $r,q$.
That is,  $r=mn/s$ cannot be replaced by a smaller number, and if $r=mn/s$, then $q=1$ is the largest number for (\ref{mainresult}) to hold. 
This is contained in the following theorem:
\begin{theorem}\label{main2}
Let $0<p<\infty$ and $0< p_1,\dots,p_m\leq \infty$ satisfy $1/p=1/p_1+\dots+1/p_m$. Suppose  $0<s<mn$. 
\begin{enumerate}
\item For any $0<r<mn/s$ and $0<q\leq \infty$, there exists $\sigma$ satisfying
\begin{equation}\label{necessary}
\sup_{k\in\zz}\big\Vert  \sigma(2^k\;\vec{\cdot}\;) \widehat{\Psi^{(m)}}\big\Vert_{L_{s}^{r,q}(\rr^{mn})}<\infty
\end{equation}
such that $T_{\sigma}$ is not bounded from $L^{p_1}\times\cdots\times L^{p_m}$ to $L^p$.

\item For $q>1$, there exists $\sigma$ satisfying 
\begin{equation*}
\sup_{k\in\zz}\big\Vert  \sigma(2^k\;\vec{\cdot}\;) \widehat{\Psi^{(m)}}\big\Vert_{L_{s}^{mn/s,q}(\rr^{mn})}<\infty
\end{equation*}
such that $T_{\sigma}$ is not bounded from $L^{p_1}\times\cdots\times L^{p_m}$ to $L^p$.
\end{enumerate}
\end{theorem}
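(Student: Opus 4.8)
\textbf{Proof proposal for Theorem \ref{main2}.}

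The plan is to construct explicit multipliers $\sigma$ whose Lorentz--Sobolev norms in \eqref{necessary} are finite but for which $T_\sigma$ fails to map $L^{p_1}\times\cdots\times L^{p_m}$ into $L^p$. Both parts rely on the same guiding principle: a single dyadic piece of $\sigma$ (say the piece near the frequency annulus of scale $1$) determines the relevant norm, so it suffices to build a compactly supported, smooth ``bump-type'' symbol $\Theta$ on $\rr^{mn}$ and set $\sigma(\xxxi):=\Theta(\xxxi)$, extended to be $0$ outside the unit-scale annulus (or to repeat dyadically in a way that does not increase the supremum over $k$). One then tests $T_\sigma$ against inputs $f_1,\dots,f_m$ that concentrate the relevant frequency interactions; by duality (pairing $T_\sigma(f_1,\dots,f_m)$ with a suitable $g$) the failure of boundedness is reduced to a lower bound on an explicit oscillatory integral together with an upper bound on $\prod_j\|f_j\|_{L^{p_j}}\|g\|_{L^{p'}}$.

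For part (1), the point is purely about the Lebesgue exponent $r$ in the Sobolev scale: one wants a function whose $L^{r,q}_s$-norm on $\rr^{mn}$ is finite precisely when $r< mn/s$ (and infinite, or at least ``too large to be useful'', at $r=mn/s$), mimicking the sharp example from \cite{Gr_Park} in the linear case. The natural candidate is (a smooth truncation of) a homogeneous singularity: take $\wh{\Phi}(\xxxi)\sim |\xxxi-\xxxi_0|^{-mn/r}$ type behavior, or more precisely let $\sigma(2^k\vec\cdot\,)\wh{\Psi^{(m)}}$ look like a function whose $(\vec I-\vec\Delta)^{s/2}$-image has decreasing rearrangement $\asymp t^{-s/(mn)}$ near $t=0$; this lies in $L^{r,q}_s$ for every $r<mn/s$ and every $q$, but produces a multiplier for which the associated bilinear/multilinear operator is unbounded because the symbol is too rough (its inverse Fourier transform decays too slowly to be integrable, so $T_\sigma$ cannot even act boundedly on the smallest reasonable target). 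The self-improvement embeddings \eqref{embedding1}, \eqref{embedding2} then upgrade this to rule out all $L^{r,q}_s$ with $r<mn/s$, $0<q\le\infty$ at once. One must check the scaling so that $\sup_{k\in\zz}$ is genuinely finite, which forces the singularity to be localized in a single annulus.

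For part (2), $r=mn/s$ is fixed and the issue is the secondary (Lorentz) index $q$: we need $\sigma$ with $\sup_k\|\sigma(2^k\vec\cdot)\wh{\Psi^{(m)}}\|_{L^{mn/s,q}_s}<\infty$ for a given $q>1$ but $T_\sigma$ unbounded. Here the model is a lacunary sum: write $\sigma=\sum_{j} c_j \sigma_j$ where each $\sigma_j$ is a rescaled bump adapted to frequency scale $2^{N_j}$ inside the unit annulus (so that after localizing by $\wh{\Psi^{(m)}}$ one sees $\sim J$ well-separated pieces of comparable individual norm), and the weights $c_j$ are chosen so that the $\ell^q$-type aggregation keeps the $L^{mn/s,q}_s$ norm bounded as $J\to\infty$ while the operator norm of $T_\sigma$ grows (it behaves like an $\ell^1$- or $\ell^{q'}$-sum because the pieces add constructively on a common spatial set after testing). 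Letting $J\to\infty$ defeats any putative uniform bound. The key computation is that the $L^{mn/s,q}_s$ quasi-norm of a sum of $J$ disjointly-supported (in a Lorentz sense) equal bumps grows like $J^{1/q}\cdot(\text{single bump norm})$ in the relevant regime, whereas the tested operator lower bound grows strictly faster than $J^{1/q}$ once $q>1$.

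The main obstacle will be the second claim of part (2): getting a clean, honest lower bound for $\|T_\sigma(f_1,\dots,f_m)\|_{L^p}$ that beats the $J^{1/q}$ growth of the symbol norm, uniformly over all admissible $(p_1,\dots,p_m)$ with $0<p_j\le\infty$, including the endpoint $p_j=\infty$. This requires choosing the test functions $f_j$ (and the dual function $g$) so that the $J$ dyadic pieces of $\sigma$ reinforce each other rather than cancel — typically by exploiting the arbitrariness of the positions of the bumps to align their phases — and then verifying that the Lorentz norm really is of size $J^{1/q}$ and not accidentally smaller because of overlaps in the rearrangement. Handling the endpoint exponents and making sure the construction for part (1) is genuinely consistent with $\sup_{k\in\zz}$ being finite (rather than just the single-scale norm) are the technical points that will take the most care.
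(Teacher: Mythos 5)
Your high-level instinct for part (1) is in the right direction: the paper also works with a single compactly supported symbol on a fixed annulus whose inverse Fourier transform has a power-law singularity, namely $\mathcal{H}_{(t,\gamma)}(\xxx)=(1+4\pi^2|\xxx|^2)^{-t/2}(1+\ln(1+4\pi^2|\xxx|^2))^{-\gamma/2}$, truncated and composed with a frequency-side cutoff. But ``$\sigma^\vee\notin L^1$ so $T_\sigma$ cannot act boundedly'' is not a proof; the paper closes this gap by testing against $f_j^{(\epsilon)}(x)=\epsilon^{n/p_j}\theta(\epsilon x)$ with $\theta$ frequency-localized so that $T_{\sigma^{(N)}}(f_1^{(\epsilon)},\dots,f_m^{(\epsilon)})$ becomes an exact convolution against $\mathcal{H}^{(N)}_{(t,\gamma)}$, and then letting $\epsilon\to 0$ and $N\to\infty$ to force $\|T_{\sigma^{(N)}}\|\gtrsim\|\mathcal{H}_{(t,\gamma)}\|_{L^1(\rr^{mn})}$. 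You would need to supply an argument of this type; the abstract ``test functions exist'' statement you give is not one.

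The genuine gap is in part (2). Your lacunary-sum scheme rests on the claim that the $L^{mn/s,q}_s$ quasi-norm of a sum of $J$ ``disjointly-supported (in a Lorentz sense)'' equal bumps grows like $J^{1/q}$. This is false. If $g_1,\dots,g_J$ are equimeasurable and have pairwise disjoint supports, then $(\sum_j g_j)^*(t)=g_1^*(t/J)$, and hence
\begin{equation*}
\Big\|\sum_{j=1}^J g_j\Big\|_{L^{p,q}}^q=\int_0^\infty\big(t^{1/p} g_1^*(t/J)\big)^q\,\frac{dt}{t}=J^{q/p}\,\|g_1\|_{L^{p,q}}^q,
\end{equation*}
so the growth is $J^{1/p}=J^{s/(mn)}$, independent of the secondary index $q$. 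Consequently your construction cannot detect the difference between $q\le 1$ and $q>1$, which is exactly what part (2) is about. The paper's mechanism is entirely different and this is the crucial idea you are missing: the secondary index is probed not by counting bumps but by the \emph{logarithmic} factor $(1+\ln(1+4\pi^2|\xxx|^2))^{-\gamma/2}$ in $\mathcal{H}_{(t,\gamma)}$, together with the precise characterizations
\begin{equation*}
\|\mathcal{H}_{(t,\gamma)}\|_{L^r}<\infty \iff t>mn/r \ \text{ or } \ (t=mn/r,\ \gamma>2/r),
\end{equation*}
\begin{equation*}
\|\wh{\mathcal{H}_{(t,\gamma)}}\|_{L^{r,q}}<\infty \iff t>mn-mn/r \ \text{ or } \ (t=mn-mn/r,\ \gamma>2/q).
\end{equation*}
Choosing $t=s$ (so $t-s=0=mn-mn\cdot\frac{s}{mn}/ (mn/s)$ sits at the endpoint) and $2/q<\gamma\le 2$ makes the Lorentz--Sobolev norm finite precisely when $q>1$ while $\|\mathcal{H}_{(t,\gamma)}\|_{L^1}=\infty$ precisely when $\gamma\le 2$, and the two requirements are simultaneously satisfiable iff $q>1$. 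This single parameter $\gamma$ is what replaces, and succeeds where, your $J^{1/q}$ heuristic fails. I would recommend abandoning the lacunary-sum route and adopting the logarithmic-correction example, proving the two equivalences above (via the pointwise asymptotics for $\wh{\mathcal{H}_{(t,\gamma)}}$) and the $\epsilon\to 0$, $N\to\infty$ lower-bound argument.
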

The key ingredients in the proof of Theorem \ref{main2} are a variant of Bessel potential estimates introduced by Grafakos and Park \cite{Gr_Park}, and the scaling arguments used in \cite{Gr_Park2, Park3}.

\begin{remark}
 Theorem \ref{main2} proves that (\ref{necessary}) with $r\geq mn/s$ and $q\leq 1$ is a necessary condition for the boundedness of $T_{\sigma}$ for all $0<p<\infty$ and $0<p,p_1,\dots,p_m\leq \infty$ satisfying $1/p=1/p_1+\dots +1/p_m$. Since our techniques are not applicable to the case $(1/p_1,\dots,1/p_m)\notin hull( \mathcal{Q}_{\frac{s}{mn}},\mathcal{P})$ in Theorem \ref{main1}, an alternative argument will be needed in the case.
 A similar question arises in terms of the parameter $s$. That is, we need to verify that estimate (\ref{mainresult}) holds for $0<s\leq mn/2$.

\end{remark}

\section{Preliminaries : Inequalities in Lorentz spaces}\label{preliminary}
In this section we review several inequalities that will be useful in the proof of the main results, and refer the reader to \cite{Gr_Park, Gr_Sl}.
We fix $N\in\nn$ and discuss inequalities of functions on the $N$ dimensional space $\rr^N$.

For a locally integrable function $f$ defined on $\rr^N$, let 
$$
\mathcal{M}^{(N)}f(x):=\sup_{Q:x\in Q}\frac{1}{|Q|}\int_Q{|f(y)|}dy
$$
 be the Hardy-Littlewood maximal function of $f$ where the supremum is taken over all cubes in $\rr^N$ containing $x$, and $\mathcal{M}^{(N)}_rf(x):=\big( \mathcal{M}^{(N)}\big(|f|^r\big)(x) \big)^{1/r}$ for $0<r<\infty$.
Then the Fefferman-Stein vector-valued maximal inequality \cite{Fe_St} says that for $0<r<p,q<\infty$
\begin{equation}\label{fsmaximal}
\big\Vert \big\{\mathcal{M}_r^{(N)}f_k\big\}_{k\in\zz} \big\Vert_{L^p(\ell^q)}\lesssim \Vert \{f_k\}_{k\in\zz}\Vert_{L^p(\ell^q)}.
\end{equation}
Moreover, (\ref{fsmaximal}) holds for $0<p\leq \infty$ and $q=\infty$.

We now recall some inequalities in Lorentz spaces. 
Most of them are consequences of a real interpolation technique and inequalities in Lebesgue spaces.

\begin{lemma}\cite[Lemma 2.1]{Gr_Park}\label{young}
Let $1<p\leq r<\infty$, $1\leq q<r$, and $0<t\leq \infty$ satisfy $1/r+1=1/p+1/q$.
Then
\begin{equation*}
\Vert f\ast g\Vert_{L^{r,t}(\rr^N)}\leq \Vert f\Vert_{L^{p,t}(\rr^N)} \Vert g\Vert_{L^q(\rr^N)}.
\end{equation*}
\end{lemma}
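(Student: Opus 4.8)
The plan is to prove Lemma \ref{young} by interpolating the classical Young-type inequality off the endpoints, but since we want a Lorentz norm only on the first factor and an $L^q$ (not Lorentz) norm on the second, the right tool is the real-interpolation version of Young's inequality with a frozen $g$. First I would fix $g\in L^q(\rr^N)$ and regard convolution with $g$ as a linear operator $S_g\colon f\mapsto f\ast g$. By the ordinary Young inequality, if $1/a+1=1/b+1/q$ then $\Vert f\ast g\Vert_{L^a}\le \Vert f\Vert_{L^b}\Vert g\Vert_{L^q}$; in particular $S_g$ maps $L^{b_0}\to L^{a_0}$ and $L^{b_1}\to L^{a_1}$ boundedly with operator norm at most $\Vert g\Vert_{L^q}$ for any two such pairs.

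Next I would choose the two endpoint exponents straddling $(p,r)$. Since $1<p\le r<\infty$ and $1\le q<r$ with $1/r+1=1/p+1/q$, pick $b_0<p<b_1$ close enough to $p$ that the companion exponents $a_i$ defined by $1/a_i+1=1/b_i+1/q$ still satisfy $1<a_i<\infty$ (possible because $1/r=1/p-1/q'\in(0,1)$ strictly, where $1/q+1/q'=1$, and the map $1/b\mapsto 1/a$ is an affine bijection). Then apply the off-diagonal Marcinkiewicz real interpolation theorem to $S_g$: with $1/p=(1-\theta)/b_0+\theta/b_1$ we automatically get $1/r=(1-\theta)/a_0+\theta/a_1$ because of the affine relation between the reciprocals, and the conclusion is
\begin{equation*}
\Vert S_g f\Vert_{L^{r,t}}\lesssim \Vert f\Vert_{L^{p,t}}\,\Vert g\Vert_{L^q}
\end{equation*}
for every $0<t\le\infty$, which is exactly the claimed inequality. (The implicit constant from Marcinkiewicz depends only on $b_0,b_1,a_0,a_1,\theta,t$, i.e. on $p,r,q,t$, as stated.)

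A cleaner alternative I would mention is to bypass interpolation of the operator and instead combine two scalar facts: the O'Neil convolution inequality $\Vert f\ast g\Vert_{L^{r,t}}\lesssim \Vert f\Vert_{L^{p,t}}\Vert g\Vert_{L^{q,1}}$ together with the continuous embedding $L^q\hookrightarrow L^{q,1}$ — but that embedding is false for $q>1$, so this shortcut only works at $q=1$ and the interpolation route is really needed for the full range $1\le q<r$. Hence I would present the Stein–Weiss/Marcinkiewicz argument above as the actual proof. The only genuine obstacle is bookkeeping: one must check that the auxiliary endpoints $b_0,b_1$ can be chosen with all four exponents $b_0,b_1,a_0,a_1$ lying strictly in $(1,\infty)$, which uses precisely the hypotheses $p>1$, $q<r$ (equivalently $1/r>0$) and $q\ge 1$; once that is verified the interpolation theorem delivers the Lorentz estimate on the nose, with the second factor retaining its plain $L^q$ norm because $g$ was frozen throughout.
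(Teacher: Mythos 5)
Your argument is sound and matches the approach the paper indicates: Lemma~\ref{young} is cited from \cite{Gr_Park} without proof, and the paper's Section~\ref{preliminary} states outright that these inequalities ``are consequences of a real interpolation technique and inequalities in Lebesgue spaces,'' which is precisely the Young-plus-real-interpolation route you take (freeze $g$, interpolate $S_g$ between two Lebesgue endpoints straddling $(p,r)$, use the affine relation $1/a=1/b+1/q-1$ to land on $L^{p,t}\to L^{r,t}$). One caveat worth flagging: as written the lemma asserts the inequality with constant exactly $1$, while real interpolation only delivers a constant depending on $p,r,q,t$; this is harmless here because the lemma is only ever invoked up to implicit constants (e.g.\ in~(\ref{upperbound})), but if one wanted the literal constant-$1$ form a direct O'Neil/rearrangement computation would be needed in place of interpolation.
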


\begin{lemma}\cite[Lemma 2.2]{Gr_Park}\label{hausyoung}
Let $2<p<\infty$ and $0<r\leq \infty$.
Then 
\begin{equation*}
\Vert \widehat{f}\, \Vert_{L^{p,r}(\rr^N)}\leq \Vert f\Vert_{L^{p',r}(\rr^N)},
\end{equation*}
where $1/p+1/p'=1$.

\end{lemma}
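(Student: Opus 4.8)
The final statement in the excerpt is Lemma~\ref{hausyoung}, the Hausdorff--Young inequality in Lorentz spaces: for $2<p<\infty$ and $0<r\le\infty$, $\|\widehat f\,\|_{L^{p,r}(\rr^N)}\le\|f\|_{L^{p',r}(\rr^N)}$.

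The plan is to obtain this by real interpolation from the two endpoint estimates, using the fact that the Fourier transform is a fixed linear operator. First I would record the two classical bounds that anchor the argument: the trivial $L^1\to L^\infty$ estimate $\|\widehat f\|_{L^\infty}\le\|f\|_{L^1}$, and the Plancherel identity $\|\widehat f\|_{L^2}=\|f\|_{L^2}$. Thus the Fourier transform maps $L^1\to L^{\infty,\infty}$ and $L^2\to L^{2,2}$ with norm at most $1$ in each case (writing $L^\infty=L^{\infty,\infty}$ and $L^2=L^{2,2}$ as Lorentz spaces). Now fix $2<p<\infty$ and choose $\theta\in(0,1)$ with $\tfrac1{p'}=\tfrac{1-\theta}{1}+\tfrac\theta2$, equivalently $\tfrac1p=\tfrac{1-\theta}{\infty}+\tfrac\theta2$; explicitly $\theta=2/p\in(0,1)$ and then $1-\theta=1-2/p\in(0,1)$, so both source exponents combine correctly to give $p'$ on the domain side and $p$ on the target side.

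Next I would invoke the real interpolation functor $(\,\cdot\,,\,\cdot\,)_{\theta,r}$ for the chosen $\theta$ and the given second index $r$. On the domain side, $(L^1,L^2)_{\theta,r}=L^{p',r}$, and on the target side, $(L^{\infty},L^2)_{\theta,r}=(L^{\infty,\infty},L^{2,2})_{\theta,r}=L^{p,r}$; these are the standard identifications of real interpolation spaces between Lebesgue (and Lorentz) spaces, valid for all $0<r\le\infty$ since the Fourier transform is quasi-linear and the couples are compatible. Applying the interpolation theorem to the Fourier transform, which is bounded with constant $1$ between each pair of endpoints, yields $\|\widehat f\,\|_{L^{p,r}}\le C_{p,r}\|f\|_{L^{p',r}}$. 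To get the constant $1$ (as stated, with no constant), one either appeals to the fact that real interpolation with exact endpoint constants $1$ and the normalization of the interpolation functor gives constant $1$, or one simply absorbs $C_{p,r}$ into an implicit constant if the paper's later uses only need $\lesssim$; since the lemma is quoted from \cite{Gr_Park} with an explicit $\le$, I would note that the sharp constant $1$ follows from choosing the interpolation functor normalized so that it reproduces the endpoint spaces with constant $1$, or alternatively from the Riesz--Thorin bound $\|\widehat f\|_{L^p}\le\|f\|_{L^{p'}}$ combined with a Lorentz refinement via Stein--Weiss interpolation with change of measure.

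The only mild obstacle is bookkeeping: one must be careful that the real interpolation identity $(L^{p_0,q_0},L^{p_1,q_1})_{\theta,r}=L^{p,r}$ with $p_0\ne p_1$ holds for \emph{all} $0<r\le\infty$ and does not require $r\ge1$ or $q_i$-constraints --- this is true precisely because $p_0\ne p_1$ here ($1\ne2$ and $\infty\ne2$), which is the regime where the second indices of the endpoints are irrelevant. I would cite Bergh--L\"ofstr\"om or Bennett--Sharpley for this identification and for the boundedness of interpolated operators in the quasi-Banach range, and then the proof is complete. No step is genuinely hard; the content is entirely the correct invocation of real interpolation between $L^1$, $L^2$, and $L^\infty$.
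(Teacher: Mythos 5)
Your argument --- real interpolation with the $(\theta,r)$ functor between the trivial bound $\Vert\widehat f\Vert_{L^\infty}\le\Vert f\Vert_{L^1}$ and Plancherel, via the identification $(L^{p_0},L^{p_1})_{\theta,r}=L^{p,r}$ for $p_0\ne p_1$ --- is the standard proof and is correct in substance; note that the paper itself offers no argument and simply cites the statement from \cite{Gr_Park}. The one place where your proposal over-reaches is the constant. Real interpolation gives $\Vert\widehat f\Vert_{(L^\infty,L^2)_{\theta,r}}\le\Vert f\Vert_{(L^1,L^2)_{\theta,r}}$ with exact constant $1$, but passing to the Lorentz quasi-norms costs the two equivalence constants in $(L^\infty,L^2)_{\theta,r}\approx L^{p,r}$ and $(L^1,L^2)_{\theta,r}\approx L^{p',r}$, both of which depend on $\theta$ and $r$. ``Normalizing the functor'' does not repair this, since there is no single normalization of $(\cdot,\cdot)_{\theta,r}$ that reproduces both endpoint couples isometrically as Lorentz spaces; and Stein--Weiss interpolation with change of measure adjusts weights on the underlying measure, not the second Lorentz index, so it is not applicable here either. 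If one genuinely wanted the stated constant $1$, the natural route would be a pointwise rearrangement inequality for $\widehat f$ (the Calder\'on operator estimate for the $(1,\infty)$--$(2,2)$ diagram) rather than the $(\theta,r)$ functor. For this paper's purposes the distinction is immaterial: the lemma is only ever invoked up to a multiplicative constant (for instance in the proof of Lemma~\ref{keylemma1}), so the $\lesssim$ version your interpolation argument cleanly delivers is all that is actually used.
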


\begin{lemma}\cite[Lemma 2.3]{Gr_Park}\label{katoponce}
Let $1<p<\infty$, $0<r\leq \infty$, and $s>0$. For any $\vartheta\in S(\rr^{N})$, we have
\begin{equation*}
\Vert  \vartheta\cdot f \Vert_{L^{p,r}_{s}(\rr^N)}\lesssim_{N,s,p,r,\vartheta} \Vert f\Vert_{L_s^{p,r}(\rr^N)}.
\end{equation*}
\end{lemma}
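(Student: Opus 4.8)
\textbf{Proof proposal for Lemma \ref{katoponce}.} The statement is the boundedness of pointwise multiplication by a Schwartz function $\vartheta$ on the Lorentz–Sobolev space $L^{p,r}_s(\rr^N)$, for $1<p<\infty$, $0<r\le\infty$, $s>0$. The plan is to reduce this to the corresponding $L^p$-based statement via the real interpolation identity $(L^{p_0}_s, L^{p_1}_s)_{\theta,r} = L^{p,r}_s$ (with $1/p = (1-\theta)/p_0 + \theta/p_1$), which holds because $(I-\Delta)^{s/2}$ is an isomorphism intertwining these spaces with the ordinary Lorentz and Lebesgue spaces, and real interpolation of Lebesgue spaces yields Lorentz spaces. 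Thus it suffices to prove the unweighted estimate $\|\vartheta f\|_{L^p_s}\lesssim \|f\|_{L^p_s}$ for all $1<p<\infty$ with constant depending on $N,s,p,\vartheta$; then picking $p_0<p<p_1$ in $(1,\infty)$ and interpolating with the fixed exponent $r$ gives the Lorentz version.

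For the $L^p_s$ estimate itself, the cleanest route is the Kato–Ponce / fractional Leibniz rule: $\|(I-\Delta)^{s/2}(\vartheta f)\|_{L^p}\lesssim \|(I-\Delta)^{s/2}\vartheta\|_{L^\infty}\|f\|_{L^p} + \|\vartheta\|_{L^\infty}\|(I-\Delta)^{s/2}f\|_{L^p}$, valid for $1<p<\infty$ and $s>0$; since $\vartheta\in\SS(\rr^N)$ both norms involving $\vartheta$ are finite, and we are done. Alternatively, if one prefers to avoid quoting Kato–Ponce, one can argue by a Littlewood–Paley / paraproduct decomposition: write $\vartheta f = \sum_{j,k}\Delta_j\vartheta\,\Delta_k f$ and split into the three regions $j\ll k$, $k\ll j$, $j\sim k$. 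In the first region the sum behaves like $\vartheta$ (a Schwartz, hence bounded, multiplier) times the high-frequency part of $f$; in the second the rapid decay of the Littlewood–Paley pieces of the Schwartz function $\vartheta$ absorbs the loss; the diagonal term is handled by Cauchy–Schwarz in the frequency parameter together with the vector-valued maximal inequality \eqref{fsmaximal}. Summing the three contributions and applying the Littlewood–Paley square function characterization of $L^p$, $1<p<\infty$, yields the claim.

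The main obstacle — and the reason the hypothesis $1<p<\infty$ (rather than $0<p<\infty$) appears — is precisely that both the interpolation identity for Lorentz–Sobolev spaces and the Littlewood–Paley characterization of $L^p_s$ rely on the boundedness of the Hilbert/Riesz transforms and of the Littlewood–Paley projections on $L^p$, which fails for $p\le 1$. Within the stated range, however, there is no real difficulty: the endpoints are classical, and the interpolation step is bookkeeping. One should only be mildly careful that the implicit constant is uniform as the interpolation parameter $\theta$ ranges over a fixed compact subinterval of $(0,1)$, which it is because the two endpoint constants depend continuously (indeed, in a controlled polynomial way) on $p_0,p_1$.
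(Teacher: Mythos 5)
Your proof is correct. The paper itself does not reproduce a proof of this lemma --- it simply cites \cite[Lemma 2.3]{Gr_Park} --- but the structure the paper uses for the closely analogous Lemma~\ref{lorentzad} is instructive for comparison. There, the $L^p_s$ step is done more elementarily: first one proves the $L^p$ bound for even-integer smoothness $s=2k$ via Leibniz's rule (in the multiplication setting this would use $\partial^\alpha\vartheta\in L^\infty$ and $\|\partial^\beta f\|_{L^p}\lesssim\|f\|_{L^p_{2k}}$ for $|\beta|\le 2k$, the latter by Riesz transform boundedness, hence $1<p<\infty$), and then complex interpolation in the smoothness index carries this to all $s\ge 0$; only afterwards does one real-interpolate the conjugated operator $g\mapsto (I-\Delta)^{s/2}\big(\vartheta\cdot(I-\Delta)^{-s/2}g\big)$ on Lebesgue spaces to reach $L^{p,r}$. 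Your appeal to the Kato--Ponce fractional Leibniz rule in the form $\|J^s(\vartheta f)\|_{L^p}\lesssim\|J^s\vartheta\|_{L^\infty}\|f\|_{L^p}+\|\vartheta\|_{L^\infty}\|J^sf\|_{L^p}$ (with $J^s=(I-\Delta)^{s/2}$) collapses the even-integer step and the complex interpolation into one move; combined with the trivial $\|f\|_{L^p}\lesssim\|f\|_{L^p_s}$ (boundedness of the Bessel potential $J^{-s}$ on $L^p$), it yields the $L^p_s$ estimate directly, and then the identical final real-interpolation step delivers the Lorentz version for every $0<r\le\infty$. Both routes are valid; yours is shorter at the cost of invoking a heavier black box, while the even-integer/complex-interpolation route is self-contained. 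One small note: your closing caveat about uniformity of constants as $\theta$ ranges over a compact subinterval is harmless but unnecessary --- $p$ is fixed, so once $p_0<p<p_1$ are chosen, $\theta$ is a single number and no uniformity issue arises.
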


\begin{lemma}\cite[Lemma 2.5]{Gr_Park}\label{holder}
Let $1<p<\infty$ and $1\leq q\leq \infty$.
Then
\begin{equation*}
 \int_{\rr^N}{\big|f(x)g(x)\big|}dx\leq \Vert f\Vert_{L^{p,q}(\rr^N)}\Vert g\Vert_{L^{p',q'}(\rr^N)}
\end{equation*}
where $1/p+1/p'=1/q+1/q'=1$.
\end{lemma}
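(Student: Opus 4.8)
The plan is to deduce this form of H\"older's inequality from two classical ingredients: the Hardy--Littlewood rearrangement inequality in $\rr^N$, which converts the left-hand side into a one-dimensional integral of decreasing rearrangements, followed by the ordinary H\"older inequality on the half-line $(0,\infty)$ equipped with the scale-invariant measure $dt/t$. First I would record the observation that, since $1/p+1/p'=1$, one has $t^{1/p}\cdot t^{1/p'}\cdot t^{-1}=1$ for every $t>0$, which is the algebraic identity that makes the two Lorentz quasinorms pair up correctly.

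For the first step I would invoke the Hardy--Littlewood inequality
\[
\int_{\rr^N}\big|f(x)g(x)\big|\,dx\le \int_0^{\infty}f^*(t)\,g^*(t)\,dt,
\]
valid for arbitrary measurable $f,g$ and requiring no restriction on $p$ or $q$. Its proof is the layer-cake computation
\[
\int_{\rr^N}|f g|\,dx=\int_0^{\infty}\!\!\int_0^{\infty}\big|\{|f|>\lambda\}\cap\{|g|>\mu\}\big|\,d\lambda\,d\mu
\le \int_0^{\infty}\!\!\int_0^{\infty}\min\big(d_f(\lambda),d_g(\mu)\big)\,d\lambda\,d\mu,
\]
together with the fact that $f^*$ is equimeasurable with $f$, so that $\{f^*>\lambda\}$ is an interval of length $d_f(\lambda)$ and hence $|\{f^*>\lambda\}\cap\{g^*>\mu\}|=\min(d_f(\lambda),d_g(\mu))$; integrating the last identity in $\lambda,\mu$ gives exactly $\int_0^\infty f^*g^*\,dt$. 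Endpoint or measure-zero ambiguities in the definition of $f^*$ do not affect any of these integrals.

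For the second step I would write, using the identity from the first paragraph,
\[
\int_0^{\infty}f^*(t)g^*(t)\,dt=\int_0^{\infty}\big(t^{1/p}f^*(t)\big)\big(t^{1/p'}g^*(t)\big)\,\frac{dt}{t},
\]
and apply H\"older's inequality on $\big((0,\infty),\tfrac{dt}{t}\big)$ with the conjugate pair $(q,q')$, $1/q+1/q'=1$, which is legitimate precisely because $1\le q\le\infty$ forces $q'\in[1,\infty]$; the result is bounded by $\|f\|_{L^{p,q}(\rr^N)}\|g\|_{L^{p',q'}(\rr^N)}$ by the very definition of the Lorentz quasinorm. The cases $q=1$ and $q=\infty$ are handled by the obvious degenerate version of H\"older, estimating one factor by its $L^\infty(\tfrac{dt}{t})$-supremum. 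I do not anticipate any genuine obstacle here: the only nontrivial ingredient is the Hardy--Littlewood inequality, itself classical, and the hypotheses $1<p<\infty$, $1\le q\le\infty$ serve merely to guarantee that $p'$ and $q'$ lie in the ranges where the right-hand side is a bona fide Lorentz (quasi-)norm and ordinary H\"older applies; the argument is essentially two lines once these are in place.
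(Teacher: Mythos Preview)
Your proof is correct and is the standard route to this inequality: Hardy--Littlewood rearrangement followed by H\"older on $(0,\infty)$ with measure $dt/t$, using $1/p+1/p'=1$ to match the powers of $t$. There is nothing to compare against in the paper itself, since the lemma is merely quoted from \cite{Gr_Park} without proof; the argument you give is essentially the one found in standard references and almost certainly the one in the cited source as well.
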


A significant role is played in the proof of the main theorem by the following lemma, whose proof can be found in \cite{Gr_Sl}.
\begin{lemma}\cite[Lemma 2.1]{Gr_Sl}\label{grsllemma}
Let $0<s<N$, and $q>N/s$. Then for any measurable function $f$ on $\rr^N$ and $k\in\zz$, there exists $C>0$ such that 
\begin{equation*}
\Big\Vert \frac{f(x-\cdot/2^k) }{(1+4\pi^2|\cdot|^2)^{s/2}}\Big\Vert_{L^{N/s,\infty}(\rr^N)}\leq C \mathcal{M}_q^{(N)}f(x) \qquad \text{uniformly in }~ k.
\end{equation*}

\end{lemma}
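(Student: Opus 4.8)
The statement to prove is Lemma~\ref{grsllemma}, which I restate: for $0<s<N$ and $q>N/s$, one has
\[
\Big\Vert \frac{f(x-\cdot/2^k)}{(1+4\pi^2|\cdot|^2)^{s/2}}\Big\Vert_{L^{N/s,\infty}(\rr^N)}\lesssim \mathcal{M}_q^{(N)}f(x)
\]
uniformly in $k\in\zz$.

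\textbf{Plan.} The first reduction I would make is to dispose of the dilation parameter $k$. Substituting $y\mapsto 2^k y$ in the definition of the $L^{N/s,\infty}$ quasi-norm: writing $g_k(y):=\frac{f(x-y/2^k)}{(1+4\pi^2|y|^2)^{s/2}}$, the distribution function of $g_k$ in terms of that of $g_0$ picks up a factor that is exactly absorbed by the $t^{s/N}$ weight in the weak-type quasi-norm, because of the precise relation between the Sobolev exponent $s$ and the Lorentz exponent $N/s$. So it suffices to prove the case $k=0$. Then, since $\mathcal{M}_q^{(N)}$ and the whole inequality are translation-covariant in $x$, I may also take $x=0$ (replacing $f$ by $f(x-\cdot)$), so the task reduces to showing
\[
\Big\Vert \frac{f(-\cdot)}{(1+4\pi^2|\cdot|^2)^{s/2}}\Big\Vert_{L^{N/s,\infty}(\rr^N)}\lesssim \Big(\mathcal{M}^{(N)}(|f|^q)(0)\Big)^{1/q}=:A.
\]

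\textbf{Main estimate.} Fix $\lambda>0$ and set $E_\lambda:=\{y:|f(-y)|>\lambda(1+4\pi^2|y|^2)^{s/2}\}$; I must show $|E_\lambda|\lesssim (A/\lambda)^{N/s}$. Decompose $E_\lambda$ dyadically by the size of $|y|$: let $E_\lambda^j:=E_\lambda\cap\{|y|\sim 2^j\}$ for $j\in\zz$. On $E_\lambda^j$ one has $|f(-y)|\gtrsim \lambda 2^{js}$, hence by Chebyshev applied to the cube $Q_j$ of sidelength $\sim 2^j$ centered at $0$,
\[
|E_\lambda^j|\leq \frac{1}{(\lambda 2^{js})^q}\int_{|y|\lesssim 2^j}|f(-y)|^q\,dy \lesssim \frac{2^{jN}}{(\lambda 2^{js})^q}\,\frac{1}{|Q_j|}\int_{Q_j}|f|^q \leq \frac{2^{jN}}{(\lambda 2^{js})^q}\,A^q.
\]
Thus $|E_\lambda^j|\lesssim \lambda^{-q}A^q\, 2^{j(N-sq)}$. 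Since $q>N/s$ we have $N-sq<0$, so the geometric series $\sum_{j}$ of these bounds, for $j$ up to the cutoff $j_0$ where $\lambda^{-q}A^q 2^{j(N-sq)}\sim 2^{jN}$ (beyond which the trivial bound $|E_\lambda^j|\lesssim 2^{jN}$ is better), is dominated by its value at $j_0$. A short computation gives $2^{j_0}\sim (A/\lambda)^{1/s}$ and the total contribution $\sum_{j\leq j_0}2^{jN}+\sum_{j>j_0}\lambda^{-q}A^q 2^{j(N-sq)}\lesssim 2^{j_0 N}\sim (A/\lambda)^{N/s}$, which is the desired bound.

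\textbf{Expected obstacle.} The argument is essentially elementary once set up, but the one point requiring genuine care is the interplay in the dilation reduction: one must check that the weak-$L^{N/s}$ quasi-norm is \emph{exactly} scale-invariant under the simultaneous change $y\mapsto 2^k y$ together with the factor $(1+4\pi^2|y|^2)^{-s/2}\mapsto(1+4\pi^2 2^{2k}|y|^2)^{-s/2}$ — the two scalings must cancel, and this forces monitoring the role of the inhomogeneous ``$1+$'' (for which one typically replaces $(1+4\pi^2|y/2^k|^2)^{s/2}$ by the smaller $(4\pi^2|y/2^k|^2)^{s/2}$ when $|y|\geq 2^k$ and uses boundedness near the origin otherwise, or equivalently one simply notes $(1+4\pi^2|y/2^k|^2)^{s/2}\gtrsim 2^{-|k|s}(1+4\pi^2|y|^2)^{s/2}$ is too lossy and instead argues directly). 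The cleanest route, which I would adopt, is to avoid the reduction to $k=0$ and instead run the dyadic decomposition above directly with the cubes $Q_j$ of sidelength $\sim 2^{j-k}$ centered at $x$: the bound $|f(x-y/2^k)|\gtrsim \lambda 2^{js}$ on the piece $|y|\sim 2^j$ translates, after $y\mapsto 2^k z$, into $|f(x-z)|\gtrsim \lambda 2^{js}$ on $|z|\sim 2^{j-k}$, and averaging over $Q_j$ reproduces $\mathcal{M}^{(N)}(|f|^q)(x)$ with no $k$-dependent constant. The rest of the geometric summation is verbatim as above.
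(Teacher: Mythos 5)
Your argument is correct. The paper itself does not reproduce a proof of this lemma — it is stated with the citation \cite[Lemma 2.1]{Gr_Sl}, and Section~\ref{preliminary} explicitly says the proof is to be found there — so there is no in-paper proof to match against, but your blind reconstruction is essentially the standard one: split the level set $E_\lambda=\{y:|f(x-y/2^k)|>\lambda(1+4\pi^2|y|^2)^{s/2}\}$ into dyadic shells $|y|\sim 2^j$, bound each shell both by its volume $\lesssim 2^{jN}$ and by Chebyshev against $\mathcal{M}^{(N)}(|f|^q)(x)$ (giving $\lesssim\lambda^{-q}A^q 2^{j(N-sq)}$), take the minimum, and sum the resulting geometric series, which is dominated at the crossover $2^{j_0}\sim(A/\lambda)^{1/s}$ and gives $(A/\lambda)^{N/s}$. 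A few points of hygiene that you partly anticipated but are worth making explicit: (i) the naive ``reduce to $k=0$ by rescaling'' does \emph{not} work because $(1+4\pi^2|y|^2)^{s/2}$ is inhomogeneous — your fallback of running the dyadic decomposition directly, changing variables $y=2^k z$ inside each shell and using cubes of sidelength $\sim 2^{j-k}$ centered at $x$, is the right move and yields a bound independent of $k$; (ii) the inequality $|f(x-y/2^k)|\gtrsim\lambda 2^{js}$ on the shell $|y|\sim 2^j$ should be justified by $(1+4\pi^2|y|^2)^{s/2}\geq(2\pi|y|)^s\gtrsim 2^{js}$, which is valid for \emph{all} $j\in\zz$ (not only $j\geq 0$), so the Chebyshev bound holds uniformly and the case split at $j=0$ is unnecessary; (iii) the hypothesis $q>N/s$, i.e. $N-sq<0$, is precisely what makes the tail $\sum_{j>j_0}2^{j(N-sq)}$ converge and be controlled by its first term, so it is used exactly once and sharply. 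With those clarifications folded in, the proof is complete and matches the Grafakos--Slav\'ikov\'a argument in structure and in all essentials.
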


\section{Proof of Theorem \ref{main1}}\label{proofmain1}

\subsection{The case $mn/s< p_1,\dots,p_m<\infty$}\label{firstcase}

Let $\Theta^{(m)}$ be a Schwartz function on $\rr^{mn}$ such that
\begin{equation*}
 \widehat{\Theta^{(m)}}(\xxxi)=1 \quad \text{ for }~ 2^{-2}m^{1/2}\leq |\xxxi|\leq 2^2 m^{1/2}, 
\end{equation*}
\begin{equation*}
\textup{Supp}(\widehat{\Theta^{(mn)}})\subset \big\{\xxxi \in \rr^{mn}:2^{-3}m^{-1/2}\leq |\xxxi|\leq 2^3m^{1/2} \big\}.
\end{equation*}
Using the fact that $\sum_{k\in\zz}{\widehat{\Psi^{(m)}}(\xxxi/2^k)}=1$ for $\xxxi\not= \000$, a triangle inequality, and Lemma \ref{katoponce}, we see that 
\begin{equation}\label{reproducing}
\sup_{k\in\zz}{\big\Vert \sigma(2^k\;\vec{\cdot}\;) \wh{\Theta^{(m)}}\big\Vert_{L_s^{mn/s,1}(\rr^{mn})}}\lesssim \sup_{k\in\zz}{\big\Vert \sigma (2^k\;\vec{\cdot}\;) \wh{\Psi^{(m)}}\big\Vert_{L_s^{mn/s,1}(\rr^{mn})}}.
\end{equation}
Therefore it suffices to show that
\begin{equation}\label{finalgoal}
\big\Vert T_{\sigma}\fff\big\Vert_{L^p(\rn)}\lesssim \sup_{k\in\zz}{\big\Vert \sigma (2^k\;\vec{\cdot}\;) \wh{\Theta^{(m)}}\big\Vert_{L_s^{mn/s,1}(\rr^{mn})}} \prod_{j=1}^{m}{\Vert f_j\Vert_{L^{p_j}(\rn)}}.
\end{equation}

We split $\sigma$, via the Littlewood-Paley partition of unity $\{\psi_k\}_{k\in\mathbb{Z}}$, as
\begin{align*}
\sigma&=\sum_{k_1,\dots,k_m \in \mathbb{Z}}{\sigma \cdot \big(\widehat{\psi_{k_1}}\otimes \cdots \otimes\widehat{\psi_{k_m}}\big)}\\
  &=\Big(\sum_{k_1\in\mathbb{Z}}\sum_{k_2,\dots,k_m\leq k_1}{\cdots}\Big)+\Big(\sum_{k_2\in\mathbb{Z}}\sum_{\substack{k_1<k_2\\k_3,\dots,k_m\leq k_2}}{\cdots}\Big)+\dots+ \Big(\sum_{k_m\in\mathbb{Z}}\sum_{k_1,\dots,k_{n-1}<k_m}{\cdots}\Big)\\
  &=:\sigma^{(1)}+\sigma^{(2)}+\dots+\sigma^{(m)}.
\end{align*}
Then (\ref{finalgoal}) is a consequence of the estimates
\begin{equation*}
\big\Vert T_{\sigma^{(l)}}\fff \big\Vert_{L^p(\rn)}\lesssim \sup_{k\in\zz}{\big\Vert \sigma (2^k\;\vec{\cdot}\;) \wh{\Theta^{(m)}}\big\Vert_{L_s^{mn/s,1}(\rr^{mn})}}\prod_{j=1}^{m}\Vert f_j\Vert_{L^{p_j}(\rn)}, \qquad 1\leq l\leq m.
\end{equation*} 
We are only concerned with the case $l=1$ as the others   follow from symmetric arguments.
We write 
\begin{align*}
\sigma^{(1)}(\xxxi)&=\sum_{k\in\mathbb{Z}}\sum_{k_2,\dots,k_m\leq k}{\sigma(\xxxi)\widehat{\psi_k}(\xi_1)\widehat{\psi_{k_2}}(\xi_2)\cdots\widehat{\psi_{k_m}}(\xi_m)}\\
 &=\sum_{k\in\mathbb{Z}}{\sigma(\xxxi)\widehat{\Theta^{(m)}}(\xxxi/2^k) \widehat{\psi_k}(\xi_1)\sum_{k_2,\dots,k_m\leq k}\widehat{\psi_{k_2}}(\xi_2)\cdots\widehat{\psi_{k_m}}(\xi_m)}
\end{align*} 
since $\widehat{\Theta^{(m)}}(\xxxi/2^k)=1$ for $ 2^{k-1}\leq |\xi_1|\leq 2^{k+1}$ and $|\xi_j|\leq 2^{k+1}$ for $2\leq j\leq m$.
Let 
\begin{equation}\label{sigmak}
\sigma_k(\xxxi):=\sigma(\xxxi)\widehat{\Theta^{(m)}}(\xxxi/2^k).
\end{equation}
Then we have
\begin{equation*}
\sigma^{(1)}(\xxxi)=\sum_{k\in\zz}{\sigma_k(\xxxi) \widehat{\psi_k}(\xi_1)\sum_{k_2,\dots,k_m\leq k}\widehat{\psi_{k_2}}(\xi_2)\cdots\widehat{\psi_{k_m}}(\xi_m)}
\end{equation*}
and further decompose $\sigma^{(1)}$ as
\begin{equation*}
\sigma^{(1)}(\xxxi)=\sigma^{(1)}_{low}(\xxxi)+\sigma^{(1)}_{high}(\xxxi)
\end{equation*} where
\begin{equation*}
\sigma^{(1)}_{low}(\xxxi):=\sum_{k\in\mathbb{Z}}\sigma_k(\xxxi)\widehat{\psi_k}(\xi_1)\sum_{\substack{k_2,\dots,k_m\leq k\\ \max_{2\leq j\leq m}{(k_j)}\geq k-4-\lfloor \log_2{m}\rfloor}}{\widehat{\psi_{k_2}}(\xi_2)\cdots\widehat{\psi_{k_m}}(\xi_m)},
\end{equation*}
\begin{equation*}
\sigma^{(1)}_{high}(\xxxi):=\sum_{k\in\mathbb{Z}}\sigma_k(\xxxi)\widehat{\psi_k}(\xi_1)\sum_{k_2,\dots,k_m\leq k-5-\lfloor \log_2m\rfloor}{\widehat{\psi_{k_2}}(\xi_2)\cdots\widehat{\psi_{k_m}}(\xi_m)}.
\end{equation*} 
We refer to $T_{\sigma_{low}^{(1)}}$ as the low frequency part, and $T_{\sigma_{high}^{(1)}}$ as the high frequency part of $T_{\sigma^{(1)}}$ (due to the Fourier supports of summands in $T_{\sigma_{low}^{(1)}}\fff$ and $T_{\sigma_{high}^{(1)}}\fff$).

If $\sigma$  is compactly supported, like $\sigma_k$ in (\ref{sigmak}),  then   $\sigma^{\vee}$ exists, and $T_{\sigma}\big(f_1,\dots,f_m\big)(x)$ can be written as the convolution   $\sigma^{\vee}\ast \big( f_1\otimes\cdots\otimes f_m\big)(x,\dots,x)$. Then we may use the following lemma whose assertion is analogous to the key estimate in the proof of Theorem \ref{knownresult} in \cite{Gr_Sl}.
\begin{lemma}\label{keylemma1}
Let $\sigma$ be a bounded function on $\rr^{mn}$ such that $\sigma^{\vee}$ exists.
Suppose that $mn/2<s<mn$ and $q>mn/s$.
Then we have
\begin{equation}\label{keylemmaest}
\big| \sigma^{\vee}\ast \big(f_1\otimes\cdots \otimes f_m\big)(\xxx) \big|\lesssim \big\Vert \sigma(2^k\;\vec{\cdot}\;)\big\Vert_{L_s^{mn/s,1}(\rr^{mn})}\mathcal{M}_q^{(n)}f_1(x_1)\cdots \mathcal{M}_q^{(n)}f_m(x_m).
\end{equation}
\end{lemma}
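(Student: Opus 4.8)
The plan is: by a scaling argument it suffices to treat $k=0$, so I will prove
\[
\big|\sigma^{\vee}\ast(f_1\otimes\cdots\otimes f_m)(\xxx)\big|\lesssim \|\sigma\|_{L_s^{mn/s,1}(\rr^{mn})}\prod_{j=1}^m\mathcal{M}_q^{(n)}f_j(x_j);
\]
the estimate for general $k\in\zz$ then follows upon replacing $\sigma$, $f_j$, $\xxx$ by suitable dilates, since $\mathcal{M}_q^{(n)}$ commutes with dilations.

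Set $h:=(\vec{I}-\vec{\Delta})^{s/2}\sigma$, so that $\|h\|_{L^{mn/s,1}(\rr^{mn})}=\|\sigma\|_{L_s^{mn/s,1}(\rr^{mn})}$ and $\sigma^{\vee}(\yyy)=h^{\vee}(\yyy)\,(1+4\pi^2|\yyy|^2)^{-s/2}$ (here $h\in L^{mn/s,1}\subset L^{mn/s}$ with $1<mn/s<2$, so $h^{\vee}$ is a genuine function). The decisive observation is that, because $mn/2<s<mn$, the conjugate exponent $(mn/s)'=\frac{mn}{mn-s}$ satisfies $2<(mn/s)'<\infty$; hence the Hausdorff--Young inequality in Lorentz spaces (Lemma \ref{hausyoung}) applies with $p=(mn/s)'$ and $r=1$ and gives
\[
\|h^{\vee}\|_{L^{(mn/s)',1}(\rr^{mn})}\le \|h\|_{L^{mn/s,1}(\rr^{mn})}=\|\sigma\|_{L_s^{mn/s,1}(\rr^{mn})}.
\]
This upgrade of the Lebesgue index of $h^{\vee}$ from $mn/s$ to $(mn/s)'$ is exactly what makes the exponents in the subsequent steps match, and it is the only place where both inequalities $mn/2<s$ and $s<mn$ are used simultaneously.

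Next, writing $\sigma^{\vee}\ast(f_1\otimes\cdots\otimes f_m)(\xxx)=\int_{\rr^{mn}}h^{\vee}(\yyy)\,\frac{\prod_{j=1}^m f_j(x_j-y_j)}{(1+4\pi^2|\yyy|^2)^{s/2}}\,d\yyy$, taking absolute values, and applying Hölder's inequality in Lorentz spaces (Lemma \ref{holder}) with the conjugate pairs $\big((mn/s)',1\big)$ and $\big(mn/s,\infty\big)$, one obtains
\[
\big|\sigma^{\vee}\ast(f_1\otimes\cdots\otimes f_m)(\xxx)\big|\le \|h^{\vee}\|_{L^{(mn/s)',1}(\rr^{mn})}\,\Big\|\frac{(|f_1|\otimes\cdots\otimes|f_m|)(\xxx-\;\cdot\;)}{(1+4\pi^2|\;\cdot\;|^2)^{s/2}}\Big\|_{L^{mn/s,\infty}(\rr^{mn})}.
\]
To the last factor I apply Lemma \ref{grsllemma} on $\rr^{mn}$ with $N=mn$, the given $s$ (which satisfies $0<s<mn$), and any $q>mn/s$; this bounds it by $\mathcal{M}_q^{(mn)}\big(|f_1|\otimes\cdots\otimes|f_m|\big)(\xxx)$. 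Finally, testing the $mn$-dimensional maximal function against product cubes $Q_1\times\cdots\times Q_m$ (applied to $|f_1|^q\otimes\cdots\otimes|f_m|^q$) yields $\mathcal{M}_q^{(mn)}\big(|f_1|\otimes\cdots\otimes|f_m|\big)(\xxx)\le\prod_{j=1}^m\mathcal{M}_q^{(n)}f_j(x_j)$. Assembling this chain and combining with the bound for $\|h^{\vee}\|_{L^{(mn/s)',1}}$ proves the lemma.

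The one genuinely non-routine point is recognizing the role of Lemma \ref{hausyoung}: without passing to the conjugate index $(mn/s)'$ for $h^{\vee}$ one would be forced to pair $h^{\vee}\in L^{mn/s,1}$ with $L^{(mn/s)',\infty}$ and hence to invoke Lemma \ref{grsllemma} with exponent $(mn/s)'$, which would demand $q>(mn/s)'>mn/s$ and thereby shrink the admissible range of $(1/p_1,\dots,1/p_m)$ in Theorem \ref{main1}; everything else is a routine combination of the Lorentz-space inequalities recalled in Section \ref{preliminary}.
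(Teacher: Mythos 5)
Your proof is correct and is essentially the same argument as the paper's: rewrite $\sigma^{\vee}$ as the inverse Fourier transform of $(\vec{I}-\vec{\Delta})^{s/2}\sigma$ divided by the Bessel weight, use Lorentz Hausdorff--Young (Lemma \ref{hausyoung}) on that transform, Lorentz H\"older (Lemma \ref{holder}) to split the integral, Lemma \ref{grsllemma} on the weak-type factor, and the product-cube observation to reduce $\mathcal{M}_q^{(mn)}$ to the tensor product of $\mathcal{M}_q^{(n)}$'s. The only cosmetic difference is that you reduce to $k=0$ by a preliminary scaling rather than carrying the change of variables $\yyy\mapsto\yyy/2^k$ through the integral as the paper does, and you correctly identify that Hausdorff--Young needs $(mn/s)'>2$ (the paper's phrase ``$mn/s>2$'' is a slip for ``$mn/s<2$'', i.e.\ $(mn/s)'>2$).
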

\begin{proof}
Let $F(\xxx):=f_1(x_1)\cdots f_m(x_m)$.
Then the left-hand side of (\ref{keylemmaest}) is 
\begin{align*}
\big| \sigma^{\vee}\ast F(\xxx)\big|\leq \int_{\rr^{mn}}{\big(1+4\pi^2|\yyy|^2 \big)^{s/2}\big|\big(\sigma(2^k \;\vec{\cdot}\;) \big)^{\vee}(\yyy) \big| \frac{|F(\xxx-\yyy/2^k)|}{(1+4\pi^2|\yyy|^2)^{s/2}}}d\yyy
\end{align*} 
and this is bounded by
\begin{align*}
&\big\Vert \big( 1+4\pi^2|\;\vec{\cdot}\;|^2\big)^{s/2}  \big( \sigma(2^k\;\vec{\cdot}\;)\big)^{\vee}  \big\Vert_{L^{(mn/s)',1}(\rr^{mn})}\Big\Vert \frac{F(\xxx-\;\vec{\cdot}\;/2^k) }{(1+4\pi^2|\;\vec{\cdot}\;|^2)^{s/2}}\Big\Vert_{L^{mn/s,\infty}(\rr^{mn})}\\
&\lesssim \big\Vert \sigma(2^k\;\vec{\cdot}\;)\big\Vert_{L_s^{mn/s,1}(\rr^{mn})}\mathcal{M}_q^{(mn)}F(\xxx),
\end{align*} 
by applying Lemma \ref{hausyoung} with $mn/s>2$ and Lemma \ref{holder} with $s<mn$ and $q>mn/s$.
Note that every cube $Q$ in $\rr^{mn}$ containing $\xxx$ can be written as the product of $m$ cubes $Q_1,\dots,Q_m$ in $\rn$ such that $x_j\in Q_j$ for $1\leq j\leq m$, and $|Q|=|Q_1|\times\cdots\times |Q_m|$.
This implies that
\begin{equation*}
\mathcal{M}_q^{(mn)}F(\xxx)\leq \mathcal{M}_q^{(n)}f_1(x_1)\cdots \mathcal{M}_q^{(n)}f_m(x_m)
\end{equation*}
and therefore (\ref{keylemmaest}) follows.
\end{proof}

\subsubsection{Low frequency part}
To obtain the estimates for the operator $T_{\sigma^{(1)}_{low}}$,
we observe that
\begin{equation*}
T_{\sigma^{(1)}_{low}}\fff(x)=\sum_{k\in\mathbb{Z}}\sum_{\substack{k_2,\dots,k_m\leq k\\ \max_{2\leq j\leq m}{(k_j)}\geq k-4-\lfloor \log_2{m}\rfloor}} {T_{\sigma_k}\big( (f_1)_k,(f_2)_{k_2},\dots,(f_m)_{k_m}\big)(x)}
\end{equation*}
where $(g)_l:=\psi_l\ast g$ for $g\in \SS(\rn)$ and $l\in\mathbb{Z}$.
 It suffices to treat only the sum over $k-4-\lfloor \log_2{m}\rfloor \leq k_2\leq k$ and $k_3,\dots,k_m\leq k_2$, and we will actually prove that
\begin{align}\label{fgoal}
&\Big\Vert \sum_{k\in\mathbb{Z}}\sum_{\substack{k-4-\lfloor \log_2{m}\rfloor \leq k_2\leq k\\k_3,\dots,k_m\leq k_2}}T_{\sigma_k}\big((f_1)_k,(f_2)_{k_2},\dots,(f_m)_{k_m} \big)\Big\Vert_{L^p(\rn)}\nonumber\\
&\lesssim \sup_{k\in\zz}{\big\Vert \sigma (2^k\;\vec{\cdot}\;) \wh{\Theta^{(m)}}\big\Vert_{L_s^{mn/s,1}(\rr^{mn})}}\prod_{j=1}^{m}\Vert f_j\Vert_{L^{p_j}(\rn)}. 
\end{align}

Let $\phi$ be the Schwartz function on $\rn$ satisfying
\begin{equation*}
\wh{\phi}(\xi):=\begin{cases}
\sum_{j\leq 0}{\wh{\psi_j}}, & \xi\not= 0\\
1,& \xi=0
\end{cases}
\end{equation*}  and $\phi_j:=2^{jn}\phi(2^j\cdot)$.
 Then  for each $k \in\mathbb{Z}$  we   write
\begin{equation*}
\sum_{k_3,\dots,  k_m\leq k_2}T_{\sigma_k}\big((f_1)_k,(f_2)_{k_2},\dots,  (f_m)_{k_m} \big)(x)=T_{\sigma_k}\big((f_1)_k,(f_2)_{k_2},(f_3)^{k_2},\dots,   (f_m)^{k_2} \big)(x)
\end{equation*}
where  $(f_j)^{k_2}:=\phi_{k_2}\ast f_j$.  
Since the sum over $k_2$ in the left-hand side of (\ref{fgoal}) is a finite sum over $k_2$ near $k$, we may consider only the case $k_2=k$ and thus our claim is
\begin{align}\label{mainmainclaim}
&\Big\Vert \sum_{k\in\mathbb{Z}}{T_{\sigma_k}\big((f_1)_k,(f_2)_k,(f_3)^k,\dots,( f_m)^k \big)}\Big\Vert_{L^p(\rn)}\nonumber\\
&\lesssim \sup_{k\in\zz}{\big\Vert \sigma (2^k\;\vec{\cdot}\;) \wh{\Theta^{(m)}}\big\Vert_{L_s^{mn/s,1}(\rr^{mn})}}\prod_{j=1}^{m}{\Vert f_j\Vert_{L^{p_j}(\rn)}}.
\end{align}

To  prove the validity of (\ref{mainmainclaim}) we express
\begin{align*}
&T_{\sigma_k}\big((f_1)_k,(f_2)_k,(f_3)^k,\dots,(f_m)^k \big)(x)\\
&=(\sigma_k)^{\vee}\ast \big[(f_1)_k\otimes (f_2)_k\otimes (f_3)^k\otimes\cdots\otimes (f_m)^k \big](x,\dots,x),
\end{align*} 
and apply Lemma \ref{keylemma1} with $mn/s<q<p_1,\dots,p_m$. Then the preceding expression is dominated by a constant multiple of 
\begin{equation*}
\big\Vert \sigma_k(2^k\;\vec{\cdot}\;)\big\Vert_{L_s^{mn/s,1}(\rr^{mn})}\mathcal{M}_q^{(n)}(f_1)_k(x)\mathcal{M}_q^{(n)}(f_2)_k(x)\prod_{j=3}^{m}\mathcal{M}_q^{(n)}(f_j)^k(x),
\end{equation*}
and this yields that the left-hand side of (\ref{mainmainclaim}) is controlled by 
$$
\sup_{k\in\zz}\big\Vert \sigma(2^k\;\vec{\cdot}\;)\wh{\Theta^{(m)}}\big\Vert_{L_s^{mn/s,1}(\rr^{mn})} 
$$
 times
\begin{align*}
&\Big\Vert \sum_{k\in\zz}{\mathcal{M}_q^{(n)}(f_1)_k\mathcal{M}_q^{(n)}(f_2)_k\Big(\prod_{j=3}^{m}{\mathcal{M}_q^{(n)}(f_j)^k} \Big)}\Big\Vert_{L^p(\rn)}\\
&\leq \big\Vert \big\{ \mathcal{M}_q^{(n)}(f_1)_k\big\}_{k\in\zz}\big\Vert_{L^{p_1}(\ell^2)}\big\Vert \big\{ \mathcal{M}_q^{(n)}(f_2)_k\big\}_{k\in\zz}\big\Vert_{L^{p_2}(\ell^2)}\prod_{j=3}^{m}{\big\Vert \big\{ \mathcal{M}_q^{(n)}(f_j)^k\big\}_{k\in\zz}\big\Vert_{L^{p_j}(\ell^{\infty})}}\\
&\lesssim \big\Vert \big\{ (f_1)_k\big\}_{k\in\zz}\big\Vert_{L^{p_1}(\ell^2)}\big\Vert \big\{ (f_2)_k\big\}_{k\in\zz}\big\Vert_{L^{p_2}(\ell^2)}\prod_{j=3}^{m}{\big\Vert \big\{ (f_j)^k\big\}_{k\in\zz}\big\Vert_{L^{p_j}(\ell^{\infty})}}
\end{align*}
in view of H\"older's inequality and (\ref{fsmaximal}).
The well known equivalences  
\begin{equation*}
 \big\Vert \big\{ (f_1)_k\big\}_{k\in\zz}\big\Vert_{L^{p_1}(\ell^2)}\approx \Vert f_1\Vert_{L^{p_1}(\rn)},\quad  \big\Vert \big\{ (f_2)_k\big\}_{k\in\zz}\big\Vert_{L^{p_2}(\ell^2)}\approx \Vert f_2\Vert_{L^{p_2}(\rn)},
\end{equation*} and
\begin{equation*}
\big\Vert \big\{ (f_j)^k\big\}_{k\in\zz}\big\Vert_{L^{p_j}(\ell^{\infty})}\approx \Vert f_j\Vert_{H^{p_j}(\rn)}\approx \Vert f_j\Vert_{L^{p_j}(\rn)} 
\end{equation*}  conclude the proof of boundedness of  $T_{\sigma^{(1)}_{low}}$. 

\subsubsection{High frequency part}

The proof for the high frequency part relies on the fact that if $\widehat{g_k}$ is supported in $\{\xi \in\rn: C^{-1} 2^{k}\leq |\xi|\leq C2^{k}\}$ for $C>1$ then
\begin{equation}\label{marshall}
\Big\Vert \Big\{ \psi_k \ast \Big(\sum_{l=k-h}^{k+h}{g_l}\Big)\Big\}_{k\in\mathbb{Z}}\Big\Vert_{L^p(\ell^q)}\lesssim_{h,C} \big\Vert \big\{ g_k\big\}_{k\in\mathbb{Z}}\big\Vert_{L^p(\ell^q)}
\end{equation} 
for $h\in \mathbb{N}$. The proof of (\ref{marshall}) is     standard, and will not be pursued here. Just use the estimate $|\psi_k \ast g_l(x)|\lesssim \mathcal{M}^{(n)}g_l(x)$ for all $k-h\leq l\leq k+h$, and apply (\ref{fsmaximal}).

We note that
\begin{equation*}
T_{\sigma_{high}^{(1)}}\fff=\sum_{k\in\mathbb{Z}}{T_{\sigma_k}\big((f_1)_k,(f_2)^{k,m},\dots, (f_m)^{k,m} \big)},
\end{equation*}
where $\phi_j$ is defined as before and $(f_j)^{k,m}:=\phi_{k-5-\lfloor \log_2{m}\rfloor}\ast f_j$ for $2\leq j\leq m$.
Observe that the Fourier transform of $T_{\sigma_k}\big((f_1)_k,(f_2)^{k,m},\dots,(f_m)^{k,m} \big)$ is supported in $\big\{\xi\in\rn : 2^{k-2}\leq |\xi|\leq 2^{k+2} \big\}$ 
and thus (\ref{marshall}) yields that
\begin{equation*}
\big\Vert T_{\sigma_{high}^{(1)}}\fff \big\Vert_{L^p(\rn)}\lesssim \big\Vert \big\{  T_{\sigma_k}\big((f_1)_k,(f_2)^{k,m},\dots,(f_m)^{k,m} \big)\big\}_{k\in\mathbb{Z}}\big\Vert_{L^p(\ell^{2})}.
\end{equation*}

Now we write, as before,
\begin{align*}
&T_{\sigma_k}\big((f_1)_k,(f_2)^{k,m},\dots,(f_m)^{k,m} \big)(x)=(\sigma_k)^{\vee}\ast \big[(f_1)_k\otimes (f_2)^{k,m}\otimes\cdots\otimes (f_m)^{k,m} \big](x,\dots,x)
\end{align*} 
and use Lemma \ref{keylemma1} with $mn/s<q<p_1,\dots,p_m$ to obtain
\begin{align*}
&\big|T_{\sigma_k}\big((f_1)_k,(f_2)^{k,m},\dots,(f_m)^{k,m} \big)(x) \big|\\
&\lesssim \big\Vert \sigma_k(2^k\;\vec{\cdot}\;)\big\Vert_{L_s^{mn/s,1}(\rr^{mn})} \mathcal{M}_q^{(n)}(f_1)_k(x)\prod_{j=2}^{m}{\mathcal{M}_q^{(n)}(f_j)^{k,m}(x)}.
\end{align*}
Therefore, $\big\Vert T_{\sigma_{high}^{(1)}}\fff \big\Vert_{L^p(\rn)}$ is controlled by a constant times
\begin{align*}
  \sup_{k\in\zz}\big\Vert \sigma(2^k\;\vec{\cdot}\;)\wh{\Theta^{(m)}}\big\Vert_{L_s^{mn/s,1}(\rr^{mn})}\Big\Vert \Big\{ \mathcal{M}_q^{(n)}(f_1)_k(x)\prod_{j=2}^{m}{\mathcal{M}_q^{(n)}(f_j)^{k,m}(x)} \Big\}_{k\in\zz}\Big\Vert_{L^p(\ell^2)}.
\end{align*}
We now apply H\"older's inequality and  (\ref{fsmaximal}) to show that
\begin{align*}
&\Big\Vert \Big\{ \mathcal{M}_q^{(n)}(f_1)_k \prod_{j=2}^{m}{\mathcal{M}_q^{(n)}(f_j)^{k,m}} \Big\}_{k\in\zz}\Big\Vert_{L^p(\ell^2)}\\
&\lesssim \big\Vert \big\{  \mathcal{M}_q^{(n)}(f_1)_k \big\}_{k\in\zz} \big\Vert_{L^{p_1}(\ell^2)}\prod_{j=2}^{m}{ \big\Vert \big\{  \mathcal{M}_q^{(n)}(f_j)^{k,m} \big\}_{k\in\zz} \big\Vert_{L^{p_j}(\ell^{\infty})}}\\
&\lesssim \big\Vert \big\{  (f_1)_k \big\}_{k\in\zz} \big\Vert_{L^{p_1}(\ell^2)}\prod_{j=2}^{m}{ \big\Vert \big\{ (f_j)^{k,m} \big\}_{k\in\zz} \big\Vert_{L^{p_j}(\ell^{\infty})}}\\
&\approx \Vert f_1\Vert_{L^{p_1}(\rn)}\prod_{j=2}^{m}{\Vert f_j\Vert_{H^{p_j}(\rn)}}\approx \prod_{j=1}^{m}{\Vert f_j\Vert_{L^{p_j}(\rn)}}.
\end{align*}
This completes the proof of the case $mn/s< p_1,\dots,p_m<\infty$.

\subsection{The case $1<p,p_1,\dots,p_m<\infty$}

Let $T_{\sigma}^{*j}$ be the $j$th transpose of $T_{\sigma}$, defined as the unique operator satisfying
\begin{equation*}
\big\langle T_{\sigma}^{*j}(f_1,\dots,f_m),h\big\rangle:=\big\langle T_{\sigma}(f_1,\dots,f_{j-1},h,f_{j+1},\dots,f_m),f_j\big\rangle
\end{equation*}
for $f_1,\dots,f_m,h\in \SS(\rn)$.
Observe that $T_{\sigma}^{*j}=T_{\sigma^{*j}}$ where 
\begin{equation*}
\sigma^{*j}(\xi_1,\dots,\xi_m)=\sigma\big(\xi_1,\dots,\xi_{j-1},-(\xi_1+\dots+\xi_m),\xi_{j+1},\dots,\xi_m\big)
\end{equation*}
and we claim that for any $1\leq j\leq m$
\begin{equation}\label{multiestimate}
\sup_{k\in\zz}\big\Vert \sigma^{*j}(2^k\;\vec{\cdot}\; )\wh{\Psi^{(m)}}\big\Vert_{L_{s}^{mn/s,1}(\rr^{mn})}\lesssim \sup_{k\in\zz}\big\Vert \sigma(2^k\;\vec{\cdot}\; )\wh{\Psi^{(m)}}\big\Vert_{L_{s}^{mn/s,1}(\rr^{mn})}.
\end{equation}
To see this, we need the following lemma:
\begin{lemma}\label{lorentzad}
Let $1<p<\infty$, $0<q\leq \infty$, and $s\geq 0$.
Let $f\in \SS(\rr^{mn})$ and for each $1\leq j\leq m$ let
\begin{equation*}
T^jf(x_1,\dots,x_m):=f(x_1,\dots, x_{j-1},-(x_1+\dots +x_m),x_{j+1},\dots ,x_m)
\end{equation*} for $x_1,\dots,x_m\in\rn$.
Then every $T^j$ satisfies the estimate
\begin{equation}\label{lorentzadjoint}
\big\Vert T^jf\big\Vert_{L_s^{p,q}(\rr^{mn})}\lesssim \big\Vert f\big\Vert_{L_s^{p,q}(\rr^{mn})}.
\end{equation}
\end{lemma}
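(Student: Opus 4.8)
The plan is to realize $T^j$ as composition with an invertible linear map and to conjugate the fractional Laplacian through it. Set $A_j(x_1,\dots,x_m):=(x_1,\dots,x_{j-1},-(x_1+\dots+x_m),x_{j+1},\dots,x_m)$, so that $T^jf=f\circ A_j=:C_{A_j}f$. One checks immediately that $A_j$ is an involution, $A_j^{-1}=A_j$, and that $|\det A_j|=1$; in particular $C_{A_j}$ is measure preserving, and since the Lorentz quasi-norm depends only on the distribution function, $C_{A_j}$ is an isometry on $L^{p,q}(\rr^{mn})$ for all $0<p<\infty$ and $0<q\le\infty$. This disposes of the case $s=0$ and, as explained below, reduces the general case to an auxiliary multiplier estimate.

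For $s>0$, I would put $g:=(\vec{I}-\vec{\Delta})^{s/2}f$ and rewrite $\Vert T^jf\Vert_{L_s^{p,q}}=\Vert(\vec{I}-\vec{\Delta})^{s/2}C_{A_j}(\vec{I}-\vec{\Delta})^{-s/2}g\Vert_{L^{p,q}}$. Stripping off one outer factor $C_{A_j}$ (an isometry), it is enough to bound $\big(C_{A_j}^{-1}(\vec{I}-\vec{\Delta})^{s/2}C_{A_j}\big)(\vec{I}-\vec{\Delta})^{-s/2}$ on $L^{p,q}(\rr^{mn})$. A short computation on the Fourier side, using $\widehat{C_A h}(\xi)=|\det A|^{-1}\widehat h((A^{-1})^T\xi)$, identifies $C_{A_j}^{-1}(\vec{I}-\vec{\Delta})^{s/2}C_{A_j}$ as the Fourier multiplier operator with symbol $(1+4\pi^2|A_j^T\xi|^2)^{s/2}$, so the task becomes to prove that the Fourier multiplier operator with symbol
\begin{equation*}
\mu(\xi):=\frac{(1+4\pi^2|A_j^T\xi|^2)^{s/2}}{(1+4\pi^2|\xi|^2)^{s/2}},\qquad\xi\in\rr^{mn},
\end{equation*}
is bounded on $L^{p,q}(\rr^{mn})$.

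To finish, since $A_j^T$ is a fixed invertible linear transformation we have $|A_j^T\xi|\approx|\xi|$, so $\mu$ is smooth on $\rr^{mn}$, bounded above and below, and a routine application of the chain and quotient rules gives the Mikhlin--H\"ormander bounds $|\partial^\alpha\mu(\xi)|\lesssim_{\alpha,s,j}|\xi|^{-|\alpha|}$ for all multi-indices $\alpha$ (indeed $\mu$ lies in the symbol class $S^0$, being a product of symbols of orders $s$ and $-s$). The classical Mikhlin multiplier theorem then yields boundedness on $L^r(\rr^{mn})$ for every $1<r<\infty$, and real interpolation between two such estimates $L^{p_0}\to L^{p_0}$ and $L^{p_1}\to L^{p_1}$ with $1<p_0<p<p_1<\infty$ gives boundedness on $L^{p,q}(\rr^{mn})$ for the given $p$ and every $0<q\le\infty$. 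Together with the isometry property of $C_{A_j}$ this establishes (\ref{lorentzadjoint}). The only step demanding genuine (if mechanical) effort is the verification of the Mikhlin estimates for $\mu$; there is no issue at $\xi=0$, where $\mu$ and all its derivatives are bounded, while for large $\xi$ the symbol is asymptotic to the zero-homogeneous function $|A_j^T\xi|^s/|\xi|^s$, whose derivatives decay at exactly the required rate.
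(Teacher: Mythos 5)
Your proof is correct, and it takes a genuinely different route from the paper's. The paper argues in three steps: (i) for nonnegative even integers $2k$ it shows $\big\Vert(\vec I-\vec\Delta)^k T^1 f\big\Vert_{L^p}\lesssim\big\Vert(\vec I-\vec\Delta)^k f\big\Vert_{L^p}$ by expanding $(\vec I-\vec\Delta)^k$, commuting with the change of variables, and changing variables in $L^p$; (ii) it upgrades the discrete Sobolev indices $2k$ to all $s\ge 0$ by complex interpolation of the scale $\big(L^p_{s_0},L^p_{s_1}\big)_\theta=L^p_s$; (iii) it conjugates by $(\vec I-\vec\Delta)^{\pm s/2}$ to land on $L^p$ and then real-interpolates in $p$ to reach $L^{p,q}$. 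You instead observe once and for all that $C_{A_j}$ is a Lorentz-space isometry (since $A_j$ is measure preserving), conjugate the Bessel potential through $C_{A_j}$ on the Fourier side, and reduce the whole lemma to the $L^{p,q}$-boundedness of the single Fourier multiplier $\mu(\xi)=(1+4\pi^2|A_j^T\xi|^2)^{s/2}/(1+4\pi^2|\xi|^2)^{s/2}$, which you dispatch with Mikhlin on $L^r$ and one application of real interpolation in $p$.

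The trade-offs: your approach handles all $s\ge 0$ at once, eliminating the complex-interpolation step, and it makes the mechanism transparent (the whole lemma is a change of variables plus one $S^0$ multiplier). In exchange it invokes the Mikhlin--H\"ormander multiplier theorem, which is a somewhat heavier tool than the Leibniz computation in the paper's Step 1, although both versions ultimately rely on a multiplier-type comparison between constant-coefficient operators and $(\vec I-\vec\Delta)^k$, so the difference in sophistication is smaller than it might appear. One small thing worth making fully explicit in your write-up is the derivative bound on $\mu$: you should note that $\langle\xi\rangle$ and $\langle A_j^T\xi\rangle$ are comparable, so the numerator and denominator each lie in the standard class $S^s$ with respect to the same weight $\langle\xi\rangle$, hence their product with the reciprocal is in $S^0$, and for $|\xi|\le 1$ the Mikhlin bound $|\partial^\alpha\mu(\xi)|\lesssim|\xi|^{-|\alpha|}$ holds trivially by smoothness and boundedness of $\mu$ near the origin. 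With that spelled out the argument is complete.
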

\begin{proof}
It is enough to deal only with the case $j=1$ because the other cases will follow from a symmetric argument.

{\bf Step 1.} 
We claim that for $k\in\{0,1,2,\dots\}$
\begin{equation}\label{lpadjoint}
\big\Vert \big(\vec{I}-\vec{\Delta}\big)^{k}T^1f \big\Vert_{L^p(\rr^{mn})}\lesssim \big\Vert \big(\vec{I}-\vec{\Delta}\big)^kf\big\Vert_{L^p(\rr^{mn})}.
\end{equation}
Using   Leibniz's rule we write 
\begin{align*}
\big|\big(\vec{I}-\vec{\Delta}\big)^kT^1f(x_1,\dots,x_m) \big|&\approx \Big| \sum_{l=0}^{k}{c_l (-\vec{\Delta})^lT^1f(x_1,\dots,x_m)}\Big|\\
&\approx \Big| \sum_{l=0}^{k}{d_l \big[(-\vec{\Delta})^lf\big]\big(-(x_1+ \cdots  +x_m ), x_2,\dots , x_m\big)}\Big|\\
&\approx \big| \big[ \big(\vec{I}-\vec{\Delta}\big)^kf\big]\big(-(x_1+  \cdots  + x_m ),x_2,\dots,x_m\big)\big|,
\end{align*} 
for some     constants $c_l,d_l$.
Then (\ref{lpadjoint}) can be achieved through a change of variables in $L^p$.

{\bf Step 2.} 
From Step 1, $T^1$ is a linear operator 
\begin{equation*}
T^1:L_{2k}^{p}(\rr^{mn})\to L_{2k}^{p}(\rr^{mn})
\end{equation*} for all $1<p<\infty$ and $k\in \{0,1,2,\dots\}$.
We perform a complex interpolation method with the fact that $\big(L_{s_0}^p(\rr^{mn}),L_{s_1}^p(\rr^{mn})\big)_{\theta}=L_s^p(\rr^{mn})$ for $s=(1-\theta)s_0+\theta s_1$, and then obtain that
\begin{equation}\label{sobolevadjoint}
\big\Vert T^1f\Vert_{L_s^p(\rr^{mn})}\lesssim \Vert f\Vert_{L_s^p(\rr^{mn})}
\end{equation} for all $s\geq 0$ and $1<p<\infty$.

{\bf Step 3.} 
Let $0<q\leq \infty$ and $s\geq 0$. We define a linear operator $T^{1,s}$ by
\begin{equation*}
T^{1,s}f(x_1,\dots,x_m):=\big(\vec{I}-\vec{\Delta}\big)^{s/2}\big[T^1\big(\vec{I}-\vec{\Delta}\big)^{-s/2}f \big](x_1,\dots,x_m).
\end{equation*}
Then (\ref{sobolevadjoint}) implies that for all $1<p<\infty$
\begin{equation*}
\big\Vert T^{1,s}f\big\Vert_{L^p(\rr^{mn})}\lesssim \Vert f\Vert_{L^p(\rr^{mn})}.
\end{equation*}
Using   real interpolation with $\big(L^{p_0}(\rr^{mn}),L^{p_1}(\rr^{mn})\big)_{\theta,q}=L^{p,q}(\rr^{mn})$ for $1/p=(1-\theta)/p_0+\theta/p_1$, we obtain  
\begin{equation*}
\big\Vert T^{1,s}f \big\Vert_{L^{p,q}(\rr^{mn})}\lesssim \Vert f\Vert_{L^{p,q}(\rr^{mn})},
\end{equation*}
which is equivalent to (\ref{lorentzadjoint}).
\end{proof}

Now let us prove (\ref{multiestimate}). Lemma \ref{lorentzad} yields that 
\begin{align*}
&\sup_{k\in\zz}\big\Vert \sigma^{*j}(2^k\;\vec{\cdot}\; )\wh{\Psi^{(m)}}\big\Vert_{L_{s}^{mn/s,1}(\rr^{mn})}\\
&~~\lesssim \sup_{k\in\zz}\big\Vert \sigma(2^k\xxxi )\wh{\Psi^{(m)}}\big(\xi_1,\dots,\xi_{j-1},-(\xi_1+\dots+\xi_m),\xi_{j+1},\dots,\xi_m\big)\big\Vert_{L_{s}^{mn/s,1}(\xxxi)}.
\end{align*}
Since $$\frac{1}{\sqrt{3}}|\xxxi|\leq  \big(|\xi_1|^2+\dots+|\xi_{j-1}|^2+|\xi_1+\dots+\xi_m|^2+|\xi_{j+1}|^2+\dots+|\xi_m|^2 \big)^{1/2}\leq \sqrt{3}|\xxxi|,$$
the preceding expression can be written as
\begin{equation*}
\sup_{k\in\zz}\Big\Vert \sigma(2^k\xxxi )\wh{\Lambda^{(m)}}(\xxxi)\wh{\Psi^{(m)}}\big(\xi_1,\dots,
 \xi_{j-1},-(\xi_1+\dots+\xi_m),\xi_{j+1},\dots,\xi_m\big)\Big\Vert_{L_{s}^{mn/s,1}(\xxxi)}
\end{equation*}
where $\Lambda^{(m)}$ is a Schwartz function on $\rr^{mn}$ having the properties that
$\wh{\Lambda^{(m)}}$ is supported in the annulus $2^{-2}\leq |\xxxi|\leq 2^2$ and $\wh{\Lambda^{(m)}}(\xxxi)=1$ for $\frac{1}{2\sqrt{3}}\leq|\xxxi|\leq 2\sqrt{3}$. 
Using Lemma \ref{katoponce}, the supremum is controlled by a constant multiple of
 \begin{equation*}
\sup_{k\in\zz}\big\Vert \sigma(2^k\;\vec{\cdot}\; )\wh{\Lambda^{(m)}}\big\Vert_{L_{s}^{mn/s,1}(\rr^{mn})}
\end{equation*} 
and we obtain (\ref{multiestimate}) in the same way as   (\ref{reproducing}).

Now we complete the proof. Assume $1<p\leq \min{(p_{1},\dots, p_m)}<2$ (otherwise, we are done from Section \ref{firstcase}). Observe that only one of $p_j$ could be less than $2$ because $1/p=1/p_1+\dots+1/ p_m<1$, and we will actually look at the case $1<p_1<2\leq p_2,\dots, p_m$.
Let $2<p',p_1'<\infty$ be the H\"older conjugates of $p, p_1$, respectively. That is, $1/p+1/p'=1/p_1+1/p_1'=1$ and accordingly, $1/p_1'=1/p'+1/p_2+\dots+1/p_m$ and $2\leq p',p_2,\dots,p_m<\infty$.
Finally, we have
\begin{align*}
\big\Vert T_{\sigma }(\fff)\big\Vert_{L^p(\rn)} &= \sup_{\Vert h\Vert_{L^{p'}(\rn)}=1}{\big|\big\langle T_{\sigma^{*1}}(h,f_2,\dots,f_m),f_1\big\rangle \big|}\\
&\leq \Vert f_1\Vert_{L^{p_1}(\rn)} \sup_{\Vert h\Vert_{L^{p'}(\rn)}=1}{\big\Vert T_{\sigma^{*1}}(h,f_2,\dots,f_m)\big\Vert_{L^{p_1'}(\rn)} }\\
&\lesssim \sup_{k\in\zz}\big\Vert \sigma(2^k\;\vec{\cdot}\; )\Psi^{(m)}\big\Vert_{L_{s}^{mn/s,1}(\rr^{mn})}\prod_{j=1}^{m}\Vert f_j\Vert_{L^{p_{j}}(\rn)}
\end{align*}
where (\ref{multiestimate}) is applied.

\section{Proof of Theorem \ref{main2}}\label{proofmain2}

For any $0<t,\gamma<\infty$ we define 
\begin{equation*}
\mathcal{H}_{(t,\gamma)}(\xxx):=\frac{1}{(1+4\pi^2|\xxx|^2)^{t/2}}\frac{1}{(1+\ln{(1+4\pi^2|\xxx|^2)})^{\gamma/2}}.
\end{equation*}
We first see that 
\begin{equation}\label{hproperty2}
\Vert \mathcal{H}_{(t,\gamma)}\Vert_{L^r(\rr^{mn})}<\infty \quad \text{if and only if}\quad  t>mn/r \quad \text{or} \quad t=mn/r, \gamma>2/r.
\end{equation}
Moreover, it was shown in \cite{Gr_Park}  that
\begin{equation*}
\big| \wh{\mathcal{H}_{(t,\gamma)}}(\xxxi)\big|\lesssim_{t,\gamma,n,m} e^{-|\xxxi|/2}  \quad \text{for }~ |\xxxi|>1
\end{equation*}
and when $0<t<mn$,
\begin{equation*}
\big| \wh{\mathcal{H}_{(t,\gamma)}}(\xxxi)\big|\approx_{t,\gamma,n,m}|\xxxi|^{-(mn-t)}(1+2\ln|\xxxi|^{-1})^{-\gamma/2} \quad \text{for }~ |\xxxi|\leq 1.
\end{equation*}
The estimates imply that
\begin{equation}\label{hproperty4}
\big\Vert \wh{\mathcal{H}_{(t,\gamma)}}\big\Vert_{L^{r,q}(\rr^{mn})}<\infty ~ \text{ if and only if } ~ t>mn-mn/r \quad \text{or}\quad t=mn-mn/r, \gamma>2/q.
\end{equation} 

Based on the properties of $\mathcal{H}_{t,\gamma}$, let us construct counter examples to prove Theorem \ref{main2}.
Let $\Gamma$ denote a Schwartz function on $\rr^{mn}$ such that $\textup{Supp}(\wh{\Gamma})\subset \{\xxxi\in \rr^{mn}:\frac{99}{100}\leq |\xxxi|\leq \frac{101}{100}\}$ and $\wh{\Gamma}(\xxxi)=1$ for $\frac{999}{1000}\leq |\xxxi|\leq \frac{1001}{1000}$.
Let $N$ be a sufficiently large positive interger and we define
\begin{equation*}
\mathcal{H}_{(t,\gamma)}^{(N)}(\xxx):=\mathcal{H}_{(t,\gamma)}(\xxx)\wh{\Phi_N}(\xxx), \qquad \xxx\in \rr^{mn}
\end{equation*}
and 
\begin{equation*}
\sigma^{(N)}(\xxxi):=\wh{\mathcal{H}_{(t,\gamma)}^{(N)}}(\xxxi)\wh{\Gamma}(\xxxi), \qquad \xxxi\in\rr^{mn}.
\end{equation*}
Then $\sigma^{(N)}$ is supported in $\{\xxxi\in\rr^{mn}:\frac{99}{100}\leq |\xxxi|\leq \frac{101}{100}\}$ in view of the support of $\wh{\Gamma}$, and this implies that
$\sigma^{(N)}(2^k\xxxi)\wh{\Psi^{(m)}}(\xxxi)$ vanishes unless $-1\leq k\leq 1$. 
Therefore, using Lemma \ref{katoponce} and a scaling argument, we have
\begin{align*}
\sup_{k\in\zz}\big\Vert \sigma^{(N)}(2^k\;\vec{\cdot}\;)\wh{\Psi^{(m)}}\big\Vert_{L_s^{r,q}(\rr^{mn})}&=\max_{-1\leq k\leq 1}{\big\Vert \sigma^{(N)}(2^k\;\vec{\cdot}\;)\wh{\Psi^{(m)}}\big\Vert_{L_s^{r,q}(\rr^{mn})}}\\
&\lesssim \max_{-1\leq k\leq 1}\big\Vert \sigma^{(N)}(2^k\;\vec{\cdot}\;)\big\Vert_{L_s^{r,q}(\rr^{mn})}\lesssim \Vert \sigma^{(N)}\Vert_{L_s^{r,q}(\rr^{mn})}.
\end{align*}
This can be further estimated, using Lemma \ref{katoponce}, by a constant times
\begin{equation*}
\big\Vert \wh{\mathcal{H}_{(t,\gamma)}^{(N)}}\big\Vert_{L_s^{r,q}(\rr^{mn})}=\big\Vert \Phi_N\ast \wh{\mathcal{H}_{(t-s,\gamma)}}\big\Vert_{L^{r,q}(\rr^{mn})}
\end{equation*}
where the equality follows from fact that
\begin{equation*}
(\vec{I}-\vec{\Delta})^{s/2}\wh{\mathcal{H}^{(N)}_{(t,\gamma)}}(\xxxi)=\wh{\mathcal{H}^{(N)}_{(t-s,\gamma)}}(\xxxi)=\Phi_N\ast \wh{\mathcal{H}_{(t-s,\gamma)}}(\xxxi).
\end{equation*} 
Finally, Lemma \ref{young} yields that
\begin{equation}\label{upperbound}
\sup_{k\in\zz}\big\Vert \sigma^{(N)}(2^k\;\vec{\cdot}\;)\wh{\Psi^{(m)}}\big\Vert_{L_s^{r,q}(\rr^{mn})}\lesssim \big\Vert \wh{\mathcal{H}_{(t-s,\gamma)}}\big\Vert_{L^{r,q}(\rr^{mn})},\quad \text{ uniformly in }~ N.
\end{equation}

On the other hand, for $0<\epsilon<1/100$ and for each $0<p_j\leq \infty$, let 
\begin{equation*}
f_j^{(\epsilon)}(x):=\epsilon^{n/p_j}\theta(\epsilon x),\qquad 1\leq j\leq m
\end{equation*} where $\theta$ is a Schwartz function on $\rn$ with $\textup{Supp}(\wh{\theta})\subset \{\xi\in\rn:\frac{999}{1000\sqrt{m}}\leq |\xi|\leq \frac{1001}{1000\sqrt{m}}\}$. 
Clearly, we have
\begin{equation}\label{fjest}
\Vert f_j^{(\epsilon)}\Vert_{L^{p_j}(\rn)}=\Vert \theta\Vert_{L^{p_j}(\rn)}\lesssim_{p_j,n} 1 \qquad \text{uniformly in }~ \epsilon.
\end{equation}
In addition,
\begin{equation*}
T_{\sigma^{(N)}}\big( f_1^{(\epsilon)},\dots,f_m^{(\epsilon)}\big)(x)=\epsilon^{n/p}\mathcal{H}_{(t,\gamma)}^{(N)}\ast \big(\theta(\epsilon\cdot)\otimes\cdots\otimes\theta(\epsilon\cdot) \big)(x,\dots,x)
\end{equation*} since $\wh{\Gamma}=1$ on the support of the Fourier transform of $\theta(\epsilon\cdot)\otimes\cdots\otimes\theta(\epsilon\cdot) $.
By taking the $L^p$ norm and using a scaling, we see that
\begin{align*}
&\big\Vert  T_{\sigma^{(N)}}\big( f_1^{(\epsilon)},\dots,f_m^{(\epsilon)}\big)\big\Vert_{L^p(\rn)}\\
&=\Big(\int_{\rn}{\Big|\int_{\rr^{mn}}{\mathcal{H}_{(t,\gamma)}^{(N)}(\yyy)\theta(x-\epsilon y_1)\cdots \theta(x-\epsilon y_m)}d\yyy \Big|^p}dx \Big)^{1/p}.
\end{align*}
We now apply (\ref{fjest}) and Fatou's lemma to obtain
\begin{align}\label{lowerbound}
&\Vert T_{\sigma^{(N)}}\Vert_{L^{p_1}\times\cdots\times L^{p_m}\to L^p}\nonumber\\
&\gtrsim \liminf_{\epsilon\to 0}\big\Vert  T_{\sigma^{(N)}}\big( f_1^{(\epsilon)},\dots,f_m^{(\epsilon)}\big)\big\Vert_{L^p(\rn)}\nonumber\\
&\geq  \Big(\int_{\rn}{\Big|\liminf_{\epsilon\to 0}\int_{\rr^{mn}}{\mathcal{H}_{(t,\gamma)}^{(N)}(\yyy)\theta(x-\epsilon y_1)\cdots \theta(x-\epsilon y_m)}d\yyy \Big|^p}dx \Big)^{1/p}.
\end{align}
Since 
\begin{equation*}
\big| \mathcal{H}_{(t,\gamma)}^{(N)}(\yyy)\theta(x-\epsilon y_1)\cdots \theta(x-\epsilon y_m)\big|\lesssim \big| \mathcal{H}_{(t,\gamma)}^{(N)}(\yyy)\big| \quad \text{ uniformly in }~\epsilon>0, ~ x\in\rn
\end{equation*}
and
\begin{equation*}
\big\Vert \mathcal{H}_{(t,\gamma)}^{(N)}\big\Vert_{L^1(\rr^{mn})}\leq \Vert \wh{\Phi_N}\Vert_{L^1(\rr^{mn})}\lesssim N^{mn}<\infty,
\end{equation*}
we can utilize the Lebesgue dominated convergence theorem and then
\begin{equation*}
(\ref{lowerbound})=\big\Vert |\theta|^m\big\Vert_{L^p(\rn)}\big\Vert \mathcal{H}_{(t,\gamma)}^{(N)}\big\Vert_{L^1(\rr^{mn})}\approx \big\Vert \mathcal{H}_{(t,\gamma)}\wh{\Phi_N}\big\Vert_{L^1(\rr^{mn})}.
\end{equation*}
By taking $\liminf_{N\to\infty}$, we finally obtain that
\begin{equation}\label{finallowerbound}
\liminf_{N\to\infty}\Vert T_{\sigma^{(N)}}\Vert_{L^{p_1}\times\cdots\times L^{p_m}\to L^p}\gtrsim\Vert \mathcal{H}_{(t,\gamma)}\Vert_{L^1(\rr^{mn})}
\end{equation} where the monotone convergence theorem is applied.

If $0<r<mn/s$ and $0<q\leq \infty$, then choose $t$ satisfying 
$$mn-(mn/r-s)<t<mn,$$
which is equivalent to $mn-mn/r<t-s<mn$ and $t<mn$. It follows from (\ref{upperbound}) and (\ref{hproperty4}) that
\begin{equation*}
\limsup_{N\to\infty}\sup_{k\in\zz}{\big\Vert \sigma^{(N)}(2^k\cdot)\wh{\Psi^{(m)}}\big\Vert_{L_s^{r,q}(\rr^{mn})}}<\infty,
\end{equation*} and (\ref{finallowerbound}) and (\ref{hproperty2}) yield that
\begin{equation*}
\liminf_{N\to\infty}\Vert T_{\sigma^{(N)}}\Vert_{L^{p_1}\times\cdots\times L^{p_m}\to L^p}=\infty,
\end{equation*} which proves the first assertion of Theorem \ref{main2}

Now assume $r=mn/s$ and $q>1$ and let $\gamma$ be a positive number with $2/q<\gamma\leq 2$. 
Then thanks to (\ref{hproperty2}), (\ref{hproperty4}), (\ref{upperbound}), and (\ref{finallowerbound}), we have 
\begin{equation*}
\limsup_{N\to\infty}\sup_{k\in\zz}{\big\Vert \sigma^{(N)}(2^k\cdot)\wh{\Psi^{(m)}}\big\Vert_{L_s^{mn/s,q}(\rr^{mn})}}<\infty
\end{equation*} and
\begin{equation*}
\liminf_{N\to\infty}\Vert T_{\sigma^{(N)}}\Vert_{L^{p_1}\times\cdots\times L^{p_m}\to L^p}=\infty,
\end{equation*}
which completes the proof of Theorem \ref{main2}.

\end{document}